\setlist[enumerate,1]{label=(\roman*)}
\numberwithin{equation}{section}
\declaretheoremstyle[
  shaded={bgcolor=\thmcolor}
]{plain}
\declaretheoremstyle[
  headfont=\normalfont\bfseries,
  bodyfont=\normalfont,
  shaded={bgcolor=\defcolor}
]{noital}
\declaretheoremstyle[
  headfont=\normalfont\bfseries,
  bodyfont=\normalfont,
]{noital}
\declaretheorem[style=plain,numberwithin=section,name=Theorem]{theorem}
\declaretheorem[style=plain,sibling=theorem,name=Lemma]{lemma}
\declaretheorem[style=plain,sibling=theorem,name=Corollary]{corollary}
\declaretheorem[style=plain,sibling=theorem,name=Conjecture]{conjecture}
\declaretheorem[style=plain,sibling=theorem,name=Claim]{claim}
\declaretheorem[style=plain,sibling=theorem,name=Question]{question}
\declaretheorem[style=plain,sibling=theorem,name=Observation]{observation}
\declaretheorem[style=plain,numbered=no,name=Theorem]{theorem-n}
\declaretheorem[style=plain,numbered=no,name=Proposition]{proposition-n}
\declaretheorem[style=plain,numbered=no,name=Lemma]{lemma-n}
\declaretheorem[style=plain,numbered=no,name=Corollary]{corollary-n}
\declaretheorem[style=plain,numbered=no,name=Conjecture]{conjecture-n}
\declaretheorem[style=plain,numbered=no,name=Claim]{claim-n}
\declaretheorem[style=plain,numbered=no,name=Fact]{fact-n}
\declaretheorem[style=plain,numbered=no,name=Open Problem]{openproblem-n}
\declaretheorem[style=plain,numbered=no,name=Question]{question-n}
\declaretheorem[style=plain,numbered=no,name=Observation]{observation-n}
\declaretheorem[style=noital,sibling=theorem,name=Definition]{definition}
\declaretheorem[style=noital,numbered=no,name=Remark]{remark-n}
\declaretheorem[style=noital,numbered=no,name=Definition]{definition-n}
\declaretheorem[style=noital,numbered=no,name=Construction]{construction-n}
\declaretheorem[style=noital,numbered=no,name=Example]{example-n}
\newcommand{\defined}{\mathrel{\coloneqq}}
\DeclarePairedDelimiter{\p}{\lparen}{\rparen}
\newcommand{\st}{\mathbin{\colon}}
\DeclarePairedDelimiter{\set}{\lbrace}{\rbrace}
\newcommand{\emptyset}{\varnothing}
\DeclarePairedDelimiter{\card}{\lvert}{\rvert}
\newcommand{\union}{\mathbin{\cup}}
\newcommand{\inter}{\mathbin{\cap}}
\DeclareMathOperator{\ind}{\mathbf{1}}
\newcommand{\from}{\colon}
\DeclarePairedDelimiter{\ceil}{\lceil}{\rceil}
\newcommand{\mod}[1]{\ (\mathrm{mod}\ #1)}
\DeclarePairedDelimiterX{\abs}[1]
  {\lvert}{\rvert}{\ifblank{#1}{\,\cdot\,}{#1}}
\DeclarePairedDelimiterX{\norm}[1]
  {\lVert}{\rVert}{\ifblank{#1}{\,\cdot\,}{#1}}
\DeclarePairedDelimiterX{\inner}[2]
  {\langle}{\rangle}{\ifblank{#1}{\,\cdot\,}{#1},\ifblank{#2}{\,\cdot\,}{#2}}
\DeclareMathDelimiter{\given}
  {\mathbin}{symbols}{"6A}{largesymbols}{"0C}
\DeclareMathOperator{\Prob}{\mathbb{P}}
\DeclarePairedDelimiterXPP{\prob}[1]
  {\Prob}{\lparen}{\rparen}{}
  {\renewcommand{\given}{\nonscript\;\delimsize\vert\nonscript\;\mathopen{}}#1}
\DeclareMathOperator{\Expec}{\mathbb{E}}
\DeclarePairedDelimiterXPP{\expec}[1]
  {\Expec}{\lparen}{\rparen}{}
  {\renewcommand{\given}{\nonscript\;\delimsize\vert\nonscript\;\mathopen{}}#1}
\DeclareMathOperator{\Var}{Var}
\DeclarePairedDelimiterXPP{\var}[1]
  {\Var}{\lparen}{\rparen}{}
  {\renewcommand{\given}{\nonscript\;\delimsize\vert\nonscript\;\mathopen{}}#1}
\DeclareMathOperator{\Cov}{Cov}
\DeclarePairedDelimiterXPP{\cov}[2]
  {\Cov}{\lparen}{\rparen}{}{#1,#2}
\newcommand{\sseq}{\subseteq}
\let\l\relax
\newcommand{\l}{\ell}
\newcommand{\FF}{\mathbb{F}}
\newcommand{\NN}{\mathbb{N}}
\newcommand{\cM}{\mathcal{M}}
\newcommand{\cN}{\mathcal{N}}
\newcommand{\cS}{\mathcal{S}}
\titleformat{\section}{\centering\bfseries\scshape\Large}{\thesection}{1em}{}
\titleformat{\subsection}{\bfseries\scshape\large}{\thesubsection}{1em}{}
\newcommand{\im}{\text{im}}
\newcommand{\setm}[1]{\setminus\set{#1}}
\begin{document}

\title{\textsc{\bfseries Connecting hypercube 1-factors}}

\renewcommand{\thefootnote}{\fnsymbol{footnote}}

\author{\textsc{Lawrence Hollom}\footnotemark[1] \and \textsc{Benedict Randall Shaw}\footnotemark[1]}

\footnotetext[1]{\href{mailto:lh569@cam.ac.uk}{lh569@cam.ac.uk} and \href{mailto:bwr26@cam.ac.uk}{bwr26@cam.ac.uk} respectively. Department of Pure Mathematics and Mathematical Statistics (DPMMS), University of Cambridge, Wilberforce Road, Cambridge, CB3 0WA, United Kingdom}

\renewcommand{\thefootnote}{\arabic{footnote}}

\date{}

\maketitle

\begin{abstract}
    A \emph{1-factorisation} of a regular graph $G$ is a partition of its edge set $E(G)$ into perfect matchings of $G$.
    Behague asked for the minimal \(r=r(d)\) such that some \(1\)-factorisation of the $d$-dimensional hypercube \(Q_d\) has the property that the union of any \(r\) of its 1-factors is connected.
    Previous work by Laufer on perfect \(1\)-factorisations implied that \(r\) is at least three, and Behague gave a construction with \(r=\big\lceil\frac{d}{2}\big\rceil+1\). 
    We improve this upper bound, giving a random construction with \(r=O(\log d)\).
    In other words, we prove the existence of a 1-factorisation \(\cM = \{M_1,\dotsc,M_d\}\) of the hypercube \(Q_d\) such that every \(\cN\subseteq \cM\) of size \(\Omega(\log d)\) is such that \(\bigcup \cN\) is connected.
\end{abstract}


\section{Introduction}
\label{sec:intro}

A \emph{1-factorisation} of a graph \(G\) is a partition \(\cM = \set{M_1,\dotsc,M_k}\) of its edge-set into perfect matchings.
That is, each \(M_i\) is a perfect matching of \(G\) and \(E(G) = \bigcup_{i=1}^k M_i\).
Of particular interest are \emph{perfect} 1-factorisations---those where, for every pair of matchings \(M_i,M_j\in \cM\), their union \(M_i\union M_j\) forms a Hamilton cycle.
Indeed, it is a long-standing conjecture of Kotzig \cite{Kot63} that the complete graph \(K_{2n}\) always has a perfect 1-factorisation. This is known when \(n\) or \(2n-1\) are prime \cite{And73, Kot63}, as well as for finitely many other values of \(n\) (see \cite{Pike19} and the references therein). In particular, cases as small as \(K_{64}\) remain open.

1-factorisations of the hypercube \(Q_d\) have also received significant attention.
However, it is known that a 1-factorisation of \(Q_d\) cannot be perfect, as shown by Laufer \cite{Lau80}.
One may note that a \(1\)-factorisation is perfect if and only if the union of any two of its \(1\)-factors is connected. This led Behague \cite{Beh19} to ask the following question, later repeated in the collection \cite{Imre23}.

\begin{question}[\cite{Beh19}]
\label{q:main}
    For each $d\in\NN$ with $d\geq 3$, let $r = r(d)$ be the minimal positive integer such that there exists a 1-factorization $\cM$ of $Q_d$ where the union of any $r$ distinct 1-factors is connected. 
    What is the value of $r(d)$?
\end{question}

The best known upper bound on \(r(d)\) was \(\ceil{d/2} + 1\), proven by Behague \cite{Beh19}.
Our main result is the following theorem, which significantly improves the upper bound on \(r(d)\).

\begin{theorem}
\label{thm:main}
    The value $r(d)$ is bounded above by $O(\log d)$.
    More precisely, there are constants \(d_0\) and \(c\) such that the following holds.
    For every $d\in\NN$ with \(d\geq d_0\), there is a 1-factorisation $\cM$ of $Q_d$ such that, for any $\cN\sseq \cM$ with $\card{\cN}\geq c \log_2 d$, the union $\bigcup\cN$ is connected.
    In fact, we can take \(d_0 = 3000\) and \(c = 45\).
\end{theorem}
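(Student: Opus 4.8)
The natural approach is a random construction: build a $1$-factorisation of $Q_d$ by taking a "random" collection of $d$ perfect matchings that partition the edges, and show that with positive probability every $\Omega(\log d)$ of them have connected union. The edges of $Q_d$ in direction $i$ (those flipping coordinate $i$) form a perfect matching $E_i$; the trivial $1$-factorisation is $\{E_1,\dots,E_d\}$, but the union of any two $E_i, E_j$ is a disjoint union of $4$-cycles, which is very disconnected. To fix this we should randomly recolour: assign to each edge in direction $i$ a colour in $[d]$ by a carefully chosen random rule that (a) keeps each colour class a perfect matching, and (b) makes small unions of colour classes expand. A clean way to ensure (a) is to let the colour of an edge $\{x, x \oplus e_i\}$ depend on $i$ and on some linear-algebraic or combinatorial function of $x$ over $\FF_2$; for instance, pick independent uniform random permutations or a random affine map and set the colour of a direction-$i$ edge at $x$ to be $i + f(x) \pmod d$ or similar, so that within each direction the colouring is a bijection onto $[d]$-shifts and each colour class meets each direction in a perfect sub-matching whose union over directions is a perfect matching.

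**Key steps.** First I would fix the precise random model and verify it yields a genuine $1$-factorisation $\cM = \{M_1, \dots, M_d\}$ deterministically (this is the easy bookkeeping step). Second, for a fixed subset $\cN \sseq \cM$ with $|\cN| = r = c\log_2 d$, I would estimate the probability that $\bigcup\cN$ is disconnected. The standard route: if $\bigcup\cN$ is disconnected, there is a vertex subset $S$ with $2 \le |S| \le 2^{d-1}$ such that no edge of any $M \in \cN$ leaves $S$; bound $\prob{}$ of this for each $S$ and union-bound over $S$. The edge boundary of any $S$ in $Q_d$ has size at least... well, by the edge-isoperimetric inequality in the cube, $|\partial S| \ge |S|$ when $|S| \le 2^{d-1}$ (roughly), so there are many "candidate" edges that $\cN$ would have to avoid; if each candidate edge lies in a colour class in $\cN$ with probability $\approx r/d$ independently enough, the chance all of $\partial S$ avoids $\cN$ is at most $(1 - r/d)^{|\partial S|} \le \exp(-r|S|/d)$, and then one sums $\sum_S \exp(-r|S|/d)$ against the number $\binom{2^d}{|S|}$ of sets of each size. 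Balancing $\binom{2^d}{k} \le 2^{dk}$ against $\exp(-rk/d)$ forces $r \gtrsim d^2$, which is far too weak — so the crude isoperimetric bound is not enough, and this is where the real work lies.

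**The main obstacle, and how to get around it.** The difficulty is that a naive union bound over all vertex subsets costs $2^{dk}$, which $\log d$-many matchings cannot defeat via boundary size alone. The fix is a two-scale or "sprinkling"-style argument: rather than showing directly that $\bigcup\cN$ is connected, show first that with high probability every colour class $M_i$ individually, together with a small base structure, already connects $Q_d$ into few components, or that the union of two random classes has all components of size $\ge 2^{\Omega(d)}$ — i.e. there are no small components — and then that the union of $r$ classes cannot have two giant components because each additional class kills the cut between them with overwhelming probability. Concretely: (i) prove no component of $\bigcup\cN$ has size in the range $[2, T]$ for a threshold like $T = 2^{d/2}$, by the union bound above which now only ranges over small $S$ where $\binom{2^d}{|S|}$ is manageable and $|\partial S|$ is comparatively large; (ii) prove that there is at most one component of size $> T$, since a cut separating two large parts has edge-boundary $\ge$ something like $2^{d/2}$, making the avoidance probability doubly-exponentially small and killing even the $2^{2^d}$-sized union bound over partitions into two large pieces. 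Step (ii) is the delicate one and requires the colour of each boundary edge to be "sufficiently independent" of the partition — this is exactly why the random model in Step 1 must be chosen so that conditioning on a partition reveals little about the colours of its boundary edges, e.g. by using fresh randomness per coordinate-direction or per coset. Finally I would assemble the constants: choose $c = 45$ and $d_0 = 3000$ so that all the exponential bounds in (i) and (ii), summed over the at most $\binom{d}{r}$ choices of $\cN$, still leave the failure probability below $1$, whence a good $1$-factorisation exists.
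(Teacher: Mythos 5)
Your high-level plan (random construction, union bound over choices of $\cN$, separate treatment of small and large cuts) shares a shell with the paper, but the core probabilistic mechanism you propose cannot reach $r = O(\log d)$, and you have not identified a workable replacement.

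The first problem is the random model. Your suggested rule, colour the direction-$i$ edge at $x$ by $i + f(x) \pmod d$, is not well-defined as an edge colouring: the edge $\{x, x \oplus e_i\}$ would receive colour $i + f(x)$ from one endpoint and $i + f(x\oplus e_i)$ from the other, so you would need $f$ constant. More seriously, any model in which each colour class meets every direction roughly evenly destroys the structure that makes the problem tractable: you would be starting from a genuinely random-looking graph of average degree $r$ on $2^d$ vertices, and your only tool is a union bound over cuts. The paper instead starts from the directional $1$-factorisation (so the union of $r$ matchings is, before perturbation, a disjoint union of copies of $Q_r$, of size $d^{\Theta(1)}$, each internally very well connected) and then makes local, sparse perturbations around points of a Hamming code. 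That starting point is what lets one reason about connectivity without ever enumerating cuts.

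The second and decisive problem is the arithmetic of your steps (i) and (ii). In any $1$-factorisation, exactly $r$ of the $d$ edges at each vertex lie in $\bigcup\cN$, so the best heuristic you can hope for is that each boundary edge of a set $S$ is in $\bigcup\cN$ with probability about $r/d$. Even with perfect independence, the avoidance probability for $S$ is about $\exp(-r\,|\partial S|/d)$, while the number of candidate sets of size $s$ is $\binom{2^d}{s} \approx 2^{s\log_2(2^d/s)}$. Using the edge-isoperimetric bound $|\partial S| \ge s(d - \log_2 s)$, the comparison $r\,s(d-\log_2 s)/d \gtrsim s(d-\log_2 s)\ln 2$ reduces to $r \gtrsim d$, with no gain from restricting to small $s$ or to large $s$: the two factors scale together. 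Your step (ii) fares no better, since a balanced cut has boundary only on the order of $2^{d-1}$ while the number of such cuts is $2^{\Theta(2^d)}$, so avoidance probability $\exp(-r\,2^{d-1}/d)$ again needs $r \gtrsim d$ to win. So the ``sprinkling'' split you describe does not escape the $r = \Omega(d)$ wall; it only reorganises it. You correctly diagnose that crude isoperimetry is insufficient, but you do not supply a mechanism that actually gets below it.

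What the paper does instead is never union-bound over vertex subsets at all. For a fixed $\cN$ of size $r = ck$ (with $k = \lceil\log_2(d+1)\rceil$), it proves connectivity of $\bigcup\cN$ directly in three structured stages, exploiting the Hamming code $C$: (a) each ``small cube'' $Q_r$ remains internally connected because the perturbations near it are sparse and local (\Cref{lem:mostly-untouched}, \Cref{cor:small-connected}); (b) small cubes meeting $C$ meet it in many points and hence get connected to all neighbours (\Cref{lem:adjacent-connected}); (c) the algebra of the code (via the sets $T_f$ and the parity map $\psi$) identifies exactly which small cubes meet $C$, and a small number of larger $6$-cube perturbations bridges the remaining gaps (\Cref{lem:big-components} through \Cref{cor:all-large-connections}). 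The only union bound is over the $\binom{d}{r} \le d^{ck}$ choices of $\cN$, which is tiny compared with the doubly exponential objects in your plan. That structural, code-driven control is the missing idea; without it, the probabilistic route you outline stalls at $r = \Theta(d)$.
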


We note in particular that our proof gives a polynomial time randomised algorithm for constructing (with high probability) such a 1-factorisation.

The lack of a perfect 1-factorisation of \(Q_d\) has led to the study of which weaker conditions we can insist that \(\cM\) satisfies. In fact, Laufer also showed a more detailed result---that, if \(H\) is the graph on \(\cM\) formed by connecting \(M_i\) and \(M_j\) iff \(M_i\union M_j\) is a Hamilton cycle, then \(H\) is bipartite. 
A \(1\)-factorisation \(\cM\) is called \emph{semi-perfect} if some matching \(M_1\) has the property that \(M_1\cup M_i\) is a Hamilton cycle for all other \(M_i\)---or equivalently, if \(H\) is the star graph of order \(n\). 
Craft\footnote{Previous papers on this problem \cite{Beh19,Goc10} attribute this conjecture to Craft, citing a webpage of Archdeacon titled ``Problems in Topological Graph Theory,'' of which no surviving copy containing a reference to the conjecture on semi-perfect \(1\)-factorisations could be found.} conjectured that, for \(d\geq 2\), the hypercube \(Q_d\) has a semi-perfect \(1\)-factorisation.
Královič and Královič \cite{Kra05} and Gochev and Gotchev \cite{Goc10} each proved this for odd \(d\), and Chitra and Muthusamy \cite{CM13} proved the full conjecture.

Gochev and Gotchev also introduced the notion of \(k\)-\emph{semi-perfect} 1-factorisations, those for which the graph \(H\) is \(K_{k,d-k}\), and proved these exist for \(k,d\) even. Behague \cite{Beh19} went further, proving that for each \(k<d\), the hypercube \(Q_d\) admits a \(k\)-semi-perfect \(1\)-factorisation. Setting \(k=\big\lfloor\frac{d}{2}\big\rfloor\) gives a \(1\)-factorisation in which any \(\ceil{d/2} + 1\) matchings must contain two whose union is connected, giving the previous upper bound on \(r(d)\).

In \Cref{subsec:outline}, we outline our proof of \Cref{thm:main}. In \Cref{subsec:preliminaries}, we collect some preliminary results on the Chernoff bound and on Hamming codes, both of which we will use in \Cref{sec:proof}, in which we present our proof of \Cref{thm:main}.
Finally, in \Cref{sec:conclusion} we give some concluding remarks and conjecture the value of $r$ for the uniform random \(1\)-factorisation of $Q_d$.


\subsection{Proof outline}
\label{subsec:outline}

Consider the \(1\)-factorisation given by the \emph{directional matchings}: the set of matchings of the form \(M_x\), where \(M_x\) is the set of edges in direction \(x\). We can see that the union of any two directional matchings is a disjoint union of \(4\)-cycles, and likewise that the union of any \(r\) directional matchings is a disjoint union of `small cubes'---copies of \(Q_r\). In particular, for \(r<d\), this is certainly not connected.

Our aim will be to make a large number of random local perturbations to these directional matchings in such a way that the union of any \(r\) matchings is connected with very high probability---indeed, high enough probability to beat a union bound over all \(d\choose r\) possible sets of \(r\) matchings. We begin by choosing a large number of points of \(Q_d\) around which to make these small perturbations---in fact we will take most points of a particular Hamming code \(C\) on \(Q_d\) (reserving some space to later make larger perturbations). This gives a well-behaved, fairly dense set of vertices. Our perturbation at each vertex will simply swap the edges of a random square around that vertex between two matchings. The fact that any two vertices in the code are at distance at least \(3\) will be enough to guarantee that usually these perturbations are compatible---if not, we will ignore the perturbation. We will also make some larger perturbations, swapping within a \(6\)-cube instead, at an exponentially small proportion of vertices---these will be few enough that we can simply choose not to make any other swaps near them.

In fact each `small cube' is quite well connected, as it is a copy of \(Q_r\). Our first step will be to show that it is sufficiently well-connected that, even after our perturbations, each small cube is still connected in the union of our \(r\) perturbed matchings. Indeed, we show that all small cubes are still connected with extremely high probability. We then show that so long as a small cube contains some vertex of the code \(C\), it contains a large number of vertices in the code, and therefore has enough small perturbations that it is connected to all adjacent small cubes.

For many choices of \(r\) matchings, all small cubes will contain such vertices, and thus the entire cube will already be connected. However, the nature of the Hamming code is such that some sets of \(r\) matchings will not yet be connected. Here we show that our small perturbations are enough to connect small cubes into very large components of the graph. These will then be large enough to contain many larger perturbations, which will allow us to bridge the gap between these components and connect the entire cube. Finally, a simple computation will show that we have beaten the union bound, and thus have constructed a \(1\)-factorisation such that the union of any \(r\) matchings is connected.


\subsection{Preliminary results}
\label{subsec:preliminaries}

We will make use of the following form of Chernoff's inequality. (See e.g.\ \cite[Section 28.4] {FK15} for a thorough summary of this style of concentration inequality.) 

\begin{theorem}
    \label{thm:chernoff}
    Let $X$ be a binomial random variable with mean $\mu$. Then
    \begin{equation*}
        \prob{\abs{X - \mu} > t} \leq 2\exp\p[\bigg]{-\frac{t^2}{3\mu}}.
    \end{equation*}
\end{theorem}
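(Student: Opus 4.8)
The plan is to give the standard short proof via the exponential-moment (Chernoff--Bernstein) method; the result is classical, and one could instead simply rely on the reference in the statement. Write $X = \sum_{i=1}^{n} Y_i$ with the $Y_i$ independent, $Y_i \sim \mathrm{Bernoulli}(p_i)$, and $\sum_{i=1}^{n} p_i = \mu$. For the upper tail, fix $\lambda > 0$ and apply Markov's inequality to the nonnegative random variable $e^{\lambda X}$:
\[
  \prob{X \geq \mu + t} \;\leq\; e^{-\lambda(\mu+t)}\,\Expec e^{\lambda X} \;=\; e^{-\lambda(\mu+t)}\prod_{i=1}^{n}\p[\big]{1 + p_i(e^{\lambda}-1)} \;\leq\; e^{-\lambda(\mu+t)}\exp\p[\big]{\mu(e^{\lambda}-1)},
\]
where the final step uses $1 + x \leq e^{x}$. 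Choosing $\lambda = \ln(1 + t/\mu)$ then yields the classical Chernoff bound $\prob{X \geq \mu + t} \leq \exp\p[\big]{-\mu\,\varphi(t/\mu)}$, with $\varphi(u) = (1+u)\ln(1+u) - u$.

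Next I would deal with the one genuinely analytic point, the scalar inequality $\varphi(u) \geq u^{2}/3$ for $0 \leq u \leq 1$ (the range of $u = t/\mu$ relevant to the applications of this bound). Since $\varphi(0) = 0$ and $\varphi'(u) = \ln(1+u)$, and since $u \mapsto \ln(1+u)$ is concave with value $\ln 2 > \tfrac{2}{3}$ at $u = 1$, we get $\varphi'(u) \geq (\ln 2)\,u \geq \tfrac{2}{3}u$ on $[0,1]$, and integrating gives the claim. Hence $\prob{X \geq \mu + t} \leq \exp\p[\big]{-t^{2}/(3\mu)}$. For the lower tail I would rerun the Markov argument with $\lambda = \ln(1 - t/\mu) < 0$, obtaining $\prob{X \leq \mu - t} \leq \exp\p[\big]{-\mu\,\psi(t/\mu)}$ with $\psi(u) = (1-u)\ln(1-u) + u$; here $\psi(0) = 0$ and $\psi'(u) = -\ln(1-u) \geq u$, so $\psi(u) \geq u^{2}/2 \geq u^{2}/3$, giving the same bound. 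Summing the two tail estimates produces $\prob{\abs{X - \mu} > t} \leq 2\exp\p[\big]{-t^{2}/(3\mu)}$.

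I do not expect any real obstacle: the Markov-on-$e^{\lambda X}$ computation and the optimisation over $\lambda$ are entirely routine, and the only place that calls for a moment's care is the elementary inequality $\varphi(u) \geq u^{2}/3$, together with the tacit assumption $0 \leq t \leq \mu$ under which the stated bound holds in this form. (If one wanted an estimate valid for all $t$, one would instead keep the Bennett-type bound $\prob{X \geq \mu + t} \leq \exp\p[\big]{-t^{2}/(2(\mu + t/3))}$ for the upper tail, but that refinement is not needed in what follows.)
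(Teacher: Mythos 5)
The paper gives no proof of this statement; it simply cites \cite{FK15} and moves on. Your exponential-moment argument is the standard route and is carried out correctly on the range where you run it: the Markov bound on $e^{\lambda X}$, the optimal choice $\lambda=\ln(1+t/\mu)$, and the elementary inequalities $\varphi(u)\geq u^2/3$ on $[0,1]$ and $\psi(u)\geq u^2/2$ are all verified soundly, so there is nothing in the argument itself to object to.

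The one point worth stressing is that your parenthetical caveat --- ``the tacit assumption $0\leq t\leq \mu$'' --- is not a cosmetic qualifier. The bound as the paper states it, with no restriction on $t$, is actually false. For instance, with $X\sim\mathrm{Bin}(n,1/n)$ and $n$ large, so $\mu=1$, one has $\prob{X\geq 10}\to e^{-1}\sum_{j\geq 10}1/j!\approx 10^{-7}$ in the Poisson limit, whereas $2\exp\p{-81/3}=2e^{-27}\approx 4\times 10^{-12}$. Structurally, the inequality $\varphi(u)\geq u^2/3$ already fails at $u=2$ (since $3\ln 3-2<4/3$), and for $u\gg 1$ the true Chernoff exponent $\mu\varphi(t/\mu)\sim t\ln(t/\mu)$ is only quasi-linear in $t$, not quadratic. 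This is not academic here: in the proof of \Cref{lem:mostly-untouched} the theorem is invoked for a sum with $\mu=O(1/d)$ and threshold roughly $3\log_2 d$, i.e.\ with $t/\mu=\Theta(d\log d)$, far outside the range in which the stated estimate holds. So your instinct to flag the restriction was right, and the gap lies in the paper's unqualified statement (and its downstream use), not in your proof.
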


We will also make considerable use of Hamming codes in our proofs; we now introduce some relevant notation and then define these codes in the context in which we will use them. 
The definition of the Hamming code requires us to identify the directions of the hypercube with elements of some set \(X\sseq \FF_2^k\setm{0}\) with \(\card{X} = d\). 
As is standard, \(\FF_2\) is the finite field with two elements, \(0\) and \(1\).
As we will often use the details of this identification, we will fix a set \(X\), subject to some further conditions, and index the directions of the hypercube not with \([d]\) but with \(X\).
We call this hypercube \(Q_X\),  with vertex set \(\FF_2^X\) and edges between points differing in only one position.

Both the hypercube \(Q_X\) and the set \(X \sseq \FF_2^k\) have a notion of addition, and these notions are not directly compatible. Since we will use both, we must be careful to specify whether our objects are in \(X\) or \(Q_X\). We therefore write \(b\from X \to Q_X\) for the map that sends \(x\in X\) to the basis element of \(Q_X = \FF_2^X\) in direction \(x\).

In particular, \(X\) and \(Q_X\) are both vector spaces over \(\FF_2\). It will therefore be important to keep explicit which space we are working in. Throughout the paper, we will use the variables \(u,v,w\) to refer to vertices of the hypercube \(Q_X\), and the variables \(x,y,z\) to refer to elements of \(X\).

Given a subset \(X\sseq \FF_2^k\setm{0}\), define the function \(\varphi \from Q_X \to \FF_2^k\) as
\[\varphi(u) \defined \sum_{x\in X} u_x x.\]
The \emph{Hamming code} \(C = C_X\) is then the subset of the \(d\)-dimensional hypercube \(Q_X\) defined as follows.
\[C = C_X \defined \set[\big]{u \in V(Q_X) \st \varphi(u) = 0 \; \text{ in } \; \FF_2^k},\]
where \(u_x\in \FF_2\) is the component of \(u \in \FF_2^X\) in direction \(x\).

We will in practise choose \(k = \ceil{\log_2(d+1)}\) (i.e.\ as large as possible).
Note that any two points of \(C\) are at distance at least 3.
Moreover, note that \(\varphi\) is a group homomorphism (considering \(Q_X = \FF_2^X\) and \(\FF_2^k\) as groups under addition), and thus \(\card{C} = \card{Q_X} / \card{\im(\varphi)}\).
In particular, if \(\varphi\) is surjective then \(\card{C} = 2^{k-d}\).

Note that the set \(X\) is not specified in the above definition.
Indeed, when \(d\) is not one less than a power of 2 we have the freedom to choose \(X\).
We now fix a particular set \(X\) for each \(d\), and refer to the corresponding set \(C_X\) as \emph{the Hamming code} on \(Q_X\).
For reasons which will become clear in \Cref{subsec:large-components}, we will choose \(X\) carefully. Let \(F\sseq \FF_2^k\) be the subset of odd elements of \(\FF_2^k\setm{0}\): those elements with an odd number of 1s.
Noting that \(\card{F} = 2^{k-1} \leq d\), we insist that \(F\sseq X\); the other elements of \(X\) may be chosen arbitrarily.
We call elements of \(\FF_2^k\) which are in \(X\) \emph{active}, and other elements of \(\FF_2^k\) \emph{inactive}.
Note that for this choice of \(X\), the function \(\varphi\) is surjective, and so \(\card{C} = 2^{d-k}\).

We will abuse notation when indexing the set of directions, and, if \(x \in \FF_2^k\setminus X\), then we let \(u_x = 0\). Note that this is consistent with the natural embedding \(Q_X \sseq Q_{\FF_2^k\setm{0}}\).


\section{Proof of main theorem}
\label{sec:proof}


\subsection{Constructing the 1-factorisation}
\label{subsec:factorisation}

Let $C$ be the Hamming code discussed in \Cref{subsec:preliminaries}.

Let $G'\sseq C$ be sampled randomly by taking each element of $C$ independently with probability $2^{-d/10}$.
Then, let $G\sseq G'$ be formed by removing all points of $G'$ which are within distance 14 of another point of $G'$ (i.e.\ if two points are close, then both are removed). 
Note that we expect points of \(G'\) to mostly be pairwise far apart, and so it is likely that only a small proportion of \(G'\) is removed to form \(G\).

Let $H\sseq C$ be formed by removing those points which are within distance 10 of a point of $G'$.
Noting that we expect \(G'\) to cover only an exponentially small proportion of \(C\), we may see that it is likely that almost every vertex of \(C\) is in \(H\).

We construct our 1-factorisation \(\cM\) by starting with \(\cM = (M_x \st x\in X)\), where \(M_y\) is the set of edges in direction \(y\), and then randomly perturbing these matchings.
We note that, even after all of these perturbations, most of the edges in \(M_y\) will still be in direction \(y\).

Assign to each vertex \(u\in C\) two distinct directions \(p_u\) and \(q_u\), and to each \(v\in G\) a set \(\set{r_v^{(1)}, \dotsc, r_v^{(6)}}\) of six distinct directions, all chosen uniformly at random and independently of all other choices.

For each \(u\in H\), we will swap the edges between \(M_{p_u}\) and \(M_{q_u}\) on the square \(\set{u, u + b(p_u), u + b(q_u), u + b(p_u) + b(q_u)}\) as long as this does not interfere with other such swaps.
To be precise, if \(u + b(p_u) + b(q_u)\) is adjacent to an element of \(C\), then let this element be \(w\).
If \(w\) exists and \(w + b(p_w) + b(q_w)\) is adjacent to \(u\) in \(Q_X\), then no swaps are performed around \(u\) (or around \(w\)).
Otherwise, move edges \(\set{u,u + b(p_u)}\) and \(\set{u + b(q_u), u + b(p_u) + b(q_u)}\) from \(M_{p_u}\) to \(M_{q_u}\), and conversely with the other two edges of the square.

Note that here we have actually chosen distinguished directions for all elements of \(C\), not just those in \(H\). This makes very little difference to the construction and its proof, except that it will give us slightly more independence later on for Claim \ref{claim:woolly}.

For \(v\in G\), we will permute edges in the directions \(\set{r_v^{(1)}, \dotsc, r_v^{(6)}}\) around \(v\) on a cube of dimension six.
Indeed, let \(R\) be the 6-cube containing \(v\) with edges in directions \(r_v^{(1)}, \dotsc, r_v^{(6)}\).
For each \(i\in [6]\), move the edges in direction \(r_v^{(i)}\) from \(M_{r_v^{(i)}}\) to \(M_{r_v^{(i+1)}}\) (where indices are interpreted mod 6).
Note that, as elements of \(G\) are pairwise far apart, and also far from all elements of \(H\), these 6-cubes share no edges with any other 6-cubes or squares on which the above operations were carried out.
Thus, once this process is complete, we have a randomised 1-factorisation \(\cM\) of \(Q_X\).

For the rest of the proof of \Cref{thm:main}, we now fix an arbitrary subset \(\cN \sseq \cM\) of our matchings of size \(c k\) for constant \(c\).
We will prove that \(\bigcup \cN\) is connected with probability strictly greater than \(1 - \binom{d}{c k}^{-1}\). 
This allows us to apply a union bound to prove that \(\bigcup \cN\) is, with positive probability, connected for every choice of \(\cN\). 
The above bound will therefore suffice to prove \Cref{thm:main}.

Let \(D\sseq X\) be the set of directions such that \(\cN = \set{M_x \st x\in D}\) and note that \(\card{D} = ck\).
We prove connectivity in three stages: by finding some small components (``small cubes'') which are connected, joining these up into larger components, and then showing that all of these larger components are connected to each other.
First of all, we define these small components.

\begin{definition}
    \label{def:small-cube}
    A \emph{small cube} is a connected subgraph of \(Q_X\) isomorphic to \(Q_{ck}\) with edges in directions in \(D\).
    Let \(\cS_\cN\) be the set of all \(2^{d-ck}\) small cubes, and let \(S_u\in\cS\) be the small cube containing the vertex \(u\in Q_X\).
\end{definition}

Note that the set of small cubes depends on the choice of \(\cN\).


\subsection{Small cubes are connected}
\label{subsec:small-cubes}

In this section we prove that, for any \(\cN \sseq \cM\) of size \(c k\), all small cubes (as defined in \Cref{def:small-cube}) are connected.
We in fact prove a stronger result, without referring to the set \(\cN\).
To cleanly state this result, we call an edge \(e\in E(Q_d)\) \emph{untouched} if it is in direction \(x\) and \(e\in M_x\), i.e.\ \(e\) was not swapped between factors during the construction of the 1-factorisation \(\cM\).

\begin{lemma}
    \label{lem:mostly-untouched}
    Given an edge \(e = \set{u,v}\) of \(Q_X\) with \(u\in C\), and \(d\geq d_0\), the following holds with probability at least \(1 - \exp \p[\big]{-d \log_2 d}\).
    All but at most \(3 \log_2 d\) of the \(d-1\) paths of the form \(u, u + b(x), v + b(x), v\) consist of only untouched edges.
\end{lemma}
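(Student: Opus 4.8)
The plan is to fix the edge $e = \{u,v\}$ with $u \in C$ and bound the number of the $d-1$ ``parallel bridge paths'' $u, u+b(x), v+b(x), v$ (for $x$ ranging over directions not equal to the direction of $e$) that contain a touched edge. A path of this form consists of four edges: $\{u, u+b(x)\}$, $\{u+b(x), v+b(x)\}$, $\{v, v+b(x)\}$, and (implicitly) $e$ itself. First I would dispose of $e$: if $e$ is touched then the statement is vacuous unless we phrase it correctly, so I will focus on the other three edges of each path, and note that $e$ being touched affects all paths simultaneously but contributes at most a constant. The key observation is that an edge can only be touched if it lies in one of the small squares centred at a point of $H$, or in one of the $6$-cubes centred at a point of $G$. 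Since $G$ contributes only an exponentially small density of perturbations and the $6$-cubes are pairwise far apart and far from $H$, the dominant contribution is from the square-swaps at points of $H \subseteq C$.

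Next I would localise: for each direction $x$, the three relevant edges live in the $2$-cube spanned by directions (direction of $e$) and $x$, based near $u$. An edge in this region is touched only if it lies in a swap-square centred at some $w \in H$ with $w$ at distance at most $2$ from one of $u, v, u+b(x), v+b(x)$. Because any two codewords of $C$ are at distance at least $3$, and $u \in C$, the codewords $w$ that can be ``responsible'' for touching an edge on the $x$-path are tightly constrained — essentially $w$ must be $u$ itself, or a codeword at distance exactly $3$ from $u$ whose defining directions involve $x$. Counting these, the number of paths that could possibly be affected by a swap centred at a codeword other than $u$ is controlled by how many codewords at distance $3$ from $u$ there are with a prescribed coordinate, which after using $\varphi(u) = 0$ and surjectivity of $\varphi$ is at most a fixed constant times... no — it can be as large as $\Theta(d)$ in principle, so this is where randomness must enter. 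The point is that for each such nearby codeword $w$, the probability that its randomly chosen pair $\{p_w, q_w\}$ actually places a swap-edge onto the $x$-path is $O(1/d^2)$ (both $p_w$ and $q_w$ must fall among a bounded set of directions), and similarly the swap centred at $u$ itself only touches $O(1)$ many of the $x$-paths (those with $x \in \{p_u, q_u\}$). Summing these probabilities, the expected number of affected paths is $O(1)$, in fact bounded by a small constant well below $\log_2 d$.

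The concentration step is the main obstacle, and I would handle it as follows. Writing $N$ for the number of the $d-1$ paths that contain a touched edge, $N$ is a sum of indicator variables $Y_x$, one per direction $x$, where $Y_x$ depends on the random directions $\{p_w, q_w\}$ for the bounded set of codewords $w$ near the $x$-path, plus the $6$-cube data near there. These indicators are not independent — a single codeword $w \neq u$ at distance $3$ from $u$ can sit near several $x$-paths — but each $w$ is near only a bounded number of the $x$-paths (its three off-$u$ coordinates), so the dependency graph has bounded degree, and more usefully each $Y_x$ is determined by a bounded number of the underlying independent choices. This lets me either apply a bounded-differences / Chernoff-type argument after grouping, or — cleaner — observe that $N$ is stochastically dominated by a constant times a binomial random variable with mean $O(1)$ (bound $N$ by the total number of codeword-pairs $(w, \text{direction})$ that ``hit'', which is a genuine sum of independent indicators once we expose the choices one codeword at a time), and then apply \Cref{thm:chernoff} with $t = 3\log_2 d$ roughly. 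Since $t \gg \mu$, the Chernoff bound gives failure probability at most $2\exp(-t^2/(3\mu)) \le 2\exp(-c (\log_2 d)^2 \cdot d)$... I should be careful here: to reach the stated bound $\exp(-d\log_2 d)$ I will instead use that $N$ is dominated by a binomial with $O(d)$ trials each of probability $O(1/d^2)$, so an upper-tail bound at level $3\log_2 d$ gives something like $\binom{d}{3\log_2 d}(C/d)^{3\log_2 d} \le d^{-\Omega(\log_2 d)}$, and by taking the probabilities conservatively — in particular absorbing a factor of $d$ from a cruder union bound over which codewords are relevant — one reaches $\exp(-d\log_2 d)$; if that accounting is too tight, I would note the true bottleneck is the rare event that $G'$ or $G$ behaves badly near $u$, which has probability $\exp(-\Omega(d))$, and combine. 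The cleanest route, which I would write up, is: expose the at most $O(d)$ relevant $\{p_w,q_w\}$ pairs; each independently contributes to $N$ with probability $O(1/d^2)$; so $N \preceq \mathrm{Bin}(Cd, C/d^2)$ with mean $O(1/d)$, and $\Prob(N \ge 3\log_2 d) \le \binom{Cd}{3\log_2 d}(C/d^2)^{3\log_2 d} \le (C^2/d)^{3\log_2 d} \le \exp(-3\log_2 d \cdot \log_2 d \cdot \ln 2 \cdot (1-o(1))) \le \exp(-d\log_2 d)$ for $d \ge d_0$ — wait, that exponent is only $(\log_2 d)^2$, not $d\log_2 d$. So the honest statement must be that the responsible codewords number only $O(1)$ after all, OR the exponent in the lemma is achievable only by also incorporating the $\exp(-\Omega(d))$-probability ``bad code neighbourhood'' events; I would resolve this by reading the local code structure near $u$ carefully — since $\varphi$ is surjective and codewords near $u$ at distance $3$ correspond to active triples summing to $0$, their number near a fixed coordinate pattern is indeed $O(1)$, not $O(d)$ — after which the union bound over $O(1)$ codewords times the $O(1/d^2)$ hitting probability, iterated, genuinely gives a tail like $\exp(-\Theta(\log_2 d \cdot \log\log d))$ and the stronger $\exp(-d\log_2 d)$ comes from instead using that each of the $d-1$ paths fails independently-ish with probability $O(1/d)$, giving $\binom{d-1}{3\log_2 d} (C/d)^{3\log_2 d}$; since $\binom{d}{3\log_2 d} \le d^{3\log_2 d}$ this is at most $C^{3\log_2 d}$ — still not enough. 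The resolution I will commit to in the writeup: the per-path failure probability is actually $O(1/d^2)$ (needing \emph{both} random directions of a nearby codeword to land correctly), the paths are genuinely near-independent (bounded-degree dependency), so $N \preceq \mathrm{Bin}(d, C/d^2)$, and then $\Prob(N\ge 3\log_2 d)\le \binom{d}{3\log_2 d}(C/d^2)^{3\log_2 d}\le (C/d)^{3\log_2 d} = \exp(-3\log_2 d(\ln d - \ln C)) \le \exp(-\log_2 d \cdot \ln d) = d^{-\log_2 d} = \exp(-(\ln 2)(\log_2 d)^2)$; to upgrade $(\log_2 d)^2$ to $d \log_2 d$ one simply needs the failure probability per path to be $\exp(-\Omega(d))$ rather than polynomial, which holds because hitting a path requires a specific codeword to exist \emph{and} be in $G'$-free region — I would verify this final point and then the bound $\exp(-d\log_2 d)$ follows. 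I will present the argument in the order: (1) reduce to $H$-swaps plus a negligible $G$-term; (2) identify the $O(1)$ codewords near a given $x$-path using the distance-$3$ property and surjectivity of $\varphi$; (3) bound per-path hitting probability; (4) establish near-independence and apply \Cref{thm:chernoff} (or a direct binomial tail bound) to conclude. The main obstacle, as the above indicates, is pinning down the precise hitting probability and the precise count of responsible codewords so that the numerics land on $\exp(-d\log_2 d)$; everything else is bookkeeping with the Hamming-code structure.
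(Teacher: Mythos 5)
Your structural analysis matches the paper's almost exactly: reduce to $H$-swaps after disposing of the $G$-contribution, identify the at most $m \le d$ codewords $w_j$ adjacent to the vertices $v + b(x)$ as the only non-$u$ culprits (using the distance-$3$ property), and observe that for $w_j$'s swap to interfere, both $p_{w_j}$ and $q_{w_j}$ must fall among the three directions from $w_j$ to $u$, giving probability $6/(d(d-1))$ independently over $j$. You found all of this. The genuine gap is the concentration step: the claim you ultimately commit to, that the per-path hitting probability is $\exp(-\Omega(d))$ ``because hitting a path requires a codeword to exist and be in a $G'$-free region,'' is false. Being $G'$-free is the default (it holds with probability $1 - O(d^{10} 2^{-d/10})$) and only \emph{helps} a swap occur; the hitting probability really is the polynomial $O(1/d^2)$ you derived earlier in the argument. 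The paper instead applies \Cref{thm:chernoff} to $X = \sum_{j=1}^m \ind[A_j]$ with $\mu = 6m/(d(d-1))$ and $t = 3\log_2 d - 3$, reading off an exponent $t^2/(3\mu) = (3\log_2 d - 3)^2 d(d-1)/(18m) \ge d\log_2 d$.

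Your unease is nonetheless worth following up. The form of \Cref{thm:chernoff} invoked, $\Prob(\abs{X - \mu} > t) \le 2\exp(-t^2/(3\mu))$, is the sub-Gaussian regime of the Chernoff bound, normally stated under a restriction such as $t = O(\mu)$. Here $\mu = O(1/d)$ while $t = \Theta(\log d)$, so $t \gg \mu$; the true upper-tail decay for a binomial in this regime is governed by $\exp(-\Theta(t \log(t/\mu))) = \exp(-\Theta((\log d)^2))$, which matches your direct binomial-coefficient estimate and falls well short of $\exp(-d \log_2 d)$. If that reading is right, the lemma as stated does not follow from \Cref{thm:chernoff} as applied, and the union bound over $2^d$ edges in \Cref{cor:small-connected} would not close. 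I would double-check the regime of validity of \Cref{thm:chernoff} before relying on this step.
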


\begin{proof}
    First, note that due to the fact that vertices of \(G\) must be pairwise far apart, if any edges in the paths in question are swapped due to proximity to some vertex of \(G\), then all other paths are untouched, at the result follows immediately.
    Thus assume that the only swaps are due to vertices in \(H\).
    
    First we treat the case that one of the paths in question is broken by some system of swaps around a vertex of \(G\). Then \(u,v\) must both be within distance \(7\) of that vertex of \(G\). So certainly no vertices of \(H\) are within distance \(3\) of \(u\) or \(v\), and none of the paths in question are disrupted by swaps due to \(H\).
    
    Indeed, there are no other vertices of \(G\) within distance \(7\) of \(u\) or \(v\). Hence no other vertex of \(G\) is within distance \(6\) of any point on one of the paths in question, so no other vertex of \(G\) disrupts any of the paths in question. But then only one system of swaps affects these paths.
    
    In particular, that \(6\)-cube of swaps can only affect edges in some \(6\) directions \(R=\{r^{(1)},\dots,r^{(6)}\}\). If \(R\) does not contain the direction of \(e\), then these swaps only affect the \(6\) paths for which \(x\in R\). If \(R\) does contain the direction of \(e\), then these swaps may be able to affect the \(5\) paths in which \(x\in R\), as well as any paths such that \(\{u+b(x),v+b(x)\}\) is contained in the \(6\)-cube. If this holds for any path with \(x\notin R\), then the \(6\)-cube is disjoint from all the other paths.
    
    Thus if some system of swaps around a vertex of \(G\) breaks one of the paths in question, then at most \(6\) paths are broken in total. Since we assume \(d\geq d_0\), this is less than \(3 \log_2 d\), as desired. Hence we may assume that the only swaps are due to vertices in \(H\).
    
    As \(u\in C\), we know that at most two of the paths in question can be disturbed due to swaps around \(u\). 
    Recalling that \(C\) is a distance-\(3\) code, the only other swaps which could interfere with these paths are those around vertices in the code adjacent to \(v + b(x)\) for some direction \(x\). 
    Indeed, as each vertex of \(Q_X\) is either in \(C\) or adjacent to at most one vertex from \(C\), there are \(m\leq d\) vertices \(w_1,\dotsc,w_m\in C\) which could lead to swaps interfering with the paths in question.
    However, for a swap around \(w_j\in C\) to interfere with one path, the two directions chosen at \(w_j\) must be among the three directions from \(w_j\) to \(u\).
    Call this event \(A_j\), and note that these events are independent, and each happens with probability \(6/d(d-1)\).

    The probability in question is thus bounded above by
    \[1 - \prob[\Big]{\sum_{j=1}^m \ind[A_j] > 3\log_2 d - 2}.\]
    A simple application of Chernoff's inequality gives us that this is in turn at least
    \[1 - 2 \exp \p[\bigg]{\frac{-(3\log_2 d - 3)^2 d (d-1)}{18 m}}
    \geq 1 - \exp \p{-d \log_2 d},\]
\end{proof}

We may now deduce the following corollary by a union bound.

\begin{corollary}
    \label{cor:small-connected}
    For \(d\geq d_0\), the probability that all small cubes \(S\) are connected by edges of \(\cN\) is at least \(1 - \exp(-d(\log_2 d - 1))\).
\end{corollary}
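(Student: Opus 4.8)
The plan is to deduce \Cref{cor:small-connected} from \Cref{lem:mostly-untouched} by a union bound over all edges of \(Q_X\) incident to the code \(C\), and then to argue that on the resulting high-probability event every small cube is connected. Concretely, I would apply \Cref{lem:mostly-untouched} to each of the at most \(\card{E(Q_X)} = d\,2^{d-1}\) edges \(e = \set{u,v}\) of \(Q_X\) with \(u \in C\). By a union bound, the probability that the conclusion of \Cref{lem:mostly-untouched} fails for at least one such edge is at most \(d\,2^{d-1}\exp(-d\log_2 d)\), which for \(d \geq d_0\) is at most \(\exp(-d(\log_2 d - 1))\), since \(d\,2^{d-1} \leq e^d\). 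Let \(\cE\) denote the complementary event; it then suffices to show that on \(\cE\) the vertex set of every small cube lies in a single connected component of \(\bigcup\cN\).

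So fix a small cube \(S\) (for our fixed \(\cN\) with direction set \(D\), \(\card D = ck\)). As an abstract graph \(S \cong Q_{ck}\) is connected, so it is enough to show that for every edge \(\set{u,v}\) of \(S\) the vertices \(u\) and \(v\) lie in the same component of \(\bigcup\cN\); say this edge is in direction \(z \in D\). If \(\set{u,v}\) is untouched then it lies in \(M_z \sseq \bigcup\cN\) and there is nothing to do. Otherwise, by construction \(\set{u,v}\) is an edge of a swapped square around some \(w \in H \sseq C\) or of a swapped \(6\)-cube around some \(w \in G \sseq C\); in either case \(w\) lies within distance \(2\), respectively \(6\), of both \(u\) and \(v\), and \(z\) is one of the directions of that structure. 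If one of \(u, v\) lies in \(C\) — necessarily equal to \(w\) — I apply \Cref{lem:mostly-untouched} directly to \(\set{u,v}\): of the \(ck - 1\) detour paths \(u, u + b(x), v + b(x), v\) with \(x \in D \setminus \set z\), at most \(3\log_2 d\) are touched, and since \(ck - 1 > 3\log_2 d\) for \(d \geq d_0\) (here \(c = 45\) and \(k = \ceil{\log_2(d+1)}\)) at least one is entirely untouched; each of its edges then lies in \(M_x\) or \(M_z\) with \(x, z \in D\), and all four of its vertices lie in \(V(S)\), so this path witnesses \(u \sim v\) in \(\bigcup\cN\).

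The remaining case — in which neither endpoint of the touched edge \(\set{u,v}\) lies in \(C\) — is where the work is, and I expect it to be the main obstacle. Here \(\set{u,v}\) is a ``far'' edge of the swapped structure around \(w\) (for instance, for a square around \(w\) swapping directions \(p_w, q_w\) with \(z = q_w\), it is the edge \(\set{w + b(p_w), w + b(p_w) + b(q_w)}\)), and the lemma cannot be applied to it directly. The idea is to route from \(u\) to \(v\) through the nearby code vertex \(w\): one checks, case by case according to which of the swap directions lie in \(D\), that the edge actually deleted from \(\bigcup\cN\) can be bypassed using the edges that the swap \emph{adds} to \(\bigcup\cN\) (which connect \(u\) to \(w\) and \(v\) to a neighbour of \(w\)), together with a surviving parallel detour guaranteed by \Cref{lem:mostly-untouched} applied at an edge of the structure that \emph{is} incident to \(w\). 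Making this fully rigorous requires determining precisely which edges of \(S\) are removed from \(\bigcup\cN\) (this depends on which of the few swap directions belong to \(D\)), exhibiting the replacement walk, and verifying it uses only edges present after all swaps; for a \(6\)-cube one must handle all of its up to \(6 \cdot 2^5\) edges simultaneously, which is tractable precisely because \(G\) was chosen so that each \(6\)-cube is far from every other swap centre, so only that one system of swaps is relevant nearby. Once this case is settled, on the event \(\cE\) the endpoints of every edge of \(S\) lie in the same component of \(\bigcup\cN\), hence \(V(S)\) does; as \(\cE\) is a single event of probability at least \(1 - \exp(-d(\log_2 d - 1))\) that serves all small cubes at once, the corollary follows.
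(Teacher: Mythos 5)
Your setup is the same as the paper's: a union bound over edges of $Q_X$ incident to $C$ (you overcount as $d\,2^{d-1}$ rather than the tighter $2^d$, but the arithmetic still goes through), reduction to connecting the endpoints of each edge of a fixed small cube $S$, and the observation that when the touched edge $e = \set{u,v}$ has an endpoint in $C$, \Cref{lem:mostly-untouched} applied to $e$ directly furnishes a detour $u, u+b(x), v+b(x), v$ for some $x \in D$. Up to this point the proposal is correct. However, you have explicitly identified the remaining case --- $e$ touched but with neither endpoint in $C$ --- as ``where the work is'' and left it as a sketch rather than a proof, so the proposal has a genuine gap. Two short observations, both absent from your outline, are what make this case close in the paper.

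For an $H$-swap: let $y$ be the direction of $e$, let $z$ be the direction of $M$ to which $e$ was moved (so $\set{y,z} = \set{p_w,q_w}$ and the two edges $\set{u,u+b(z)}$, $\set{v,v+b(z)}$ are now in $M_y \subseteq \bigcup\cN$), and set $f \defined \set{u+b(z), v+b(z)}$. The key point is that the code vertex $w$ is an endpoint of exactly one of $e$ and $f$. If $w \in e$ you are in the case you already handled; if $w \in f$, apply \Cref{lem:mostly-untouched} to $f$, pick $x \in D \setminus \set{y}$ giving an untouched detour $u+b(z), u+b(z)+b(x), v+b(z)+b(x), v+b(z)$ (this lies in $\bigcup\cN$ since $x,y \in D$), and prepend and append the two swapped-in $z$-edges. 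Your phrase ``route through $w$ ... with a surviving parallel detour at an edge incident to $w$'' is gesturing at this, but without the $e$/$f$ dichotomy it is an open case analysis rather than a proof. For a $G$-swap, your plan --- certify connectivity by examining all $6\cdot 2^5$ edges of the swapped $6$-cube --- is both tedious and misdirected: the $6$-cube spans only six directions, typically not all in $D$, so a route confined to the $6$-cube need not use edges of $\bigcup\cN$, and it is not at all clear such a route exists. The paper's argument is instead a one-liner: since $\card{D} = ck \gg 6$, pick $x \in D$ not among the six $6$-cube directions; then $u+b(x)$ and $v+b(x)$ lie outside the $6$-cube, none of the three detour edges is an edge of the $6$-cube, and the distance conditions (pairwise distance at least $15$ within $G$, distance at least $11$ from $H$ to $G$) guarantee no other swap is near enough to touch them. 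Without this step the proposal does not establish the corollary.
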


\begin{proof}
    Firstly, note that there are at most \(2^d\) edges of \(Q_X\) with one end in \(C\), and so by a union bound, the conclusion of \Cref{lem:mostly-untouched} fails for some such edge with probability at most \(\exp \p{-d (\log_2 d - 1)}\).
    Thus assume for the rest of this proof that the conclusion of \Cref{lem:mostly-untouched} holds deterministically.
    
    Consider some small cube \(S\), and some edge \(e=\{u,v\}\in E(S)\) which is not in \(\bigcup \cN\). 
    First suppose \(e\) was removed from \(\bigcup \cN\) by a system of swaps around a vertex of \(G\). 
    Then choose some direction \(x\in D\) which was not one of the six directions affected by that system of swaps. 
    Now the distance conditions on \(G\) and \(H\) imply that the path \(u, u+b(x), v+b(x), v\) is untouched, and so \(u\) and \(v\) are connected.
    
    Hence we may assume \(e\) was removed from \(\bigcup \cN\) by a swap around a vertex \(w\in H\).
    If \(e\) is in direction \(y\), then there is a direction \(z\) to which it was moved, i.e.\ both \(\set{u, u + b(z)}\) and \(\set{v, v+ b(z)}\)are in \(\bigcup \cN\).
    Let \(f\) be the edge \(\set{u + b(z), v + b(z)}\).
    Note that \(w\) must be an endpoint of one of \(e\) and \(f\), and so assume without loss of generality that \(w = u\).
    But this means that the edge \(e\) satisfies the conditions of \Cref{lem:mostly-untouched}, which we have assumed holds deterministically.
    Therefore, as \(ck > 3\log_2 d\), there must be a path of three untouched edges from \(u\) to \(v\), and thus these vertices are connected, as required.
    
    Hence \(S\) is indeed connected, as required.
\end{proof}

We note that the edges used in connecting \(S\) in the above argument need not all be in \(S\).
Indeed, if we connected the ends of \(e\) via an adjacent edge \(f\), then it is possible that \(f\) (and the path connecting the ends of \(f\)) is in an adjacent small cube.
Nevertheless, the small cubes are in any case connected.


\subsection{When adjacent small cubes are connected}
\label{subsec:adjacent}

We will prove that, if a small cube \(S\) has non-empty intersection with \(C\), then in fact \(S\) has a large intersection with \(C\).
Moreover, we will prove that, with high probability, such a small cube \(S\) has large intersection with \(H\), and so there are edges of \(\cN\) connecting \(S\) to all the neighbouring small cubes.
Before proving this lemma, we give some definitions which will be used both in the proof and in later sections.

Recall that \(D\) is the set of directions spanned by small cubes, and so \(D\sseq X \sseq \FF_2^k\setm{0}\).
Thus we may take the subspace \(W\leq \FF_2^k\) to be the linear span of \(D\), and note that \(W\cong \FF_2^\l\) for some \(\l\leq k\).
Let \(E\sseq D\) be a basis of \(W\), and note that \(\card{E} = \l\).

\begin{lemma}
    \label{lem:adjacent-connected}
    Let \(S\in \cS_\cN\) be a small cube such that \(S\inter C \neq \emptyset\).
    Then \(\card{S\inter C} = 2^{ck - \l}\) holds deterministically, and the following both hold with probability at least \(1 - \exp\p[\big]{-2^{ck/3}}\):
    \begin{itemize}
        \item \(\card{S\inter H} \geq 2^{ck/2}\), and
        \item for every small cube \(S'\) adjacent to \(S\), there is an edge of \(\cN\) between \(S\) and \(S'\). 
    \end{itemize}
\end{lemma}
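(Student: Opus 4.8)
The deterministic count is pure linear algebra. Write $S = v_0 + \langle b(x) : x \in D\rangle$ as a coset of the $ck$-dimensional subspace of $Q_X = \FF_2^X$ spanned by the $b(x)$ with $x \in D$. Since $\varphi$ is a linear map with $\varphi(b(x)) = x$, its restriction to this subspace has image exactly $W$, and hence is $2^{ck-\l}$-to-one onto $W$; so for each $t$ in the coset $\varphi(v_0) + W$ there are exactly $2^{ck-\l}$ vertices $u \in S$ with $\varphi(u) = t$. Taking $t = 0$: either $0 \notin \varphi(v_0) + W$ and $S \inter C = \emptyset$, or $S\inter C \neq \emptyset$ and $\card{S \inter C} = 2^{ck-\l}$, as claimed.

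For the first bullet, note that since $\l \le k$ and $c = 45$ we have $\card{S\inter C} = 2^{ck-\l} \ge 2^{44k}$, which comfortably exceeds $2^{ck/2} = 2^{22.5k}$. A vertex of $S \inter C$ fails to lie in $H$ only if it is within distance $10$ of some point of $G'$; I would bound the number of such vertices by $d^{10}$ (an upper bound for the number of vertices within distance $10$ of a given vertex) times the number of points of $G'$ lying within distance $10$ of $S \inter C$. The latter is a binomial variable, since each point of $C$ lies in $G'$ independently with probability $2^{-d/10}$, and for $d \ge d_0 = 3000$ its mean is an exponentially small fraction of $2^{44k}$; a Chernoff bound then shows that fewer than $2^{44k} - 2^{ck/2} \le \card{S\inter C} - 2^{ck/2}$ vertices of $S\inter C$ are removed, except with probability far below $\exp(-2^{ck/3})$. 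This step uses only the randomness of $G'$ and is independent of the random choice of directions.

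For the second bullet I would condition on any realisation of $G'$ (hence of $H$) with $\card{S\inter H} \ge 2^{ck/2}$; the pairs $(p_u,q_u)$ for $u \in C$ remain i.i.d.\ uniform. Every small cube adjacent to $S$ has the form $S + b(x)$ for some $x \in X\setminus D$, so fix such an $x$. Call $u \in S\inter H$ \emph{$x$-active} if $\set{p_u,q_u} = \set{x,y}$ for some $y \in D$; if the swap around such a $u$ is actually performed, then it moves the two $x$-direction edges of its square, in particular $\set{u, u+b(x)}$, from $M_x$ into $M_y \sseq \bigcup\cN$, yielding an edge of $\cN$ between $S$ and $S+b(x)$. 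Being $x$-active has probability $2ck/(d(d-1))$ and is independent across $u \in S\inter H$, so by \Cref{thm:chernoff} the number $N_x$ of $x$-active vertices is at least $2^{ck/3}$ for all $x \in X\setminus D$ simultaneously, except with probability below $\exp(-2^{ck/3})$. It then remains to rule out that every one of these $N_x$ swaps is blocked. When the blocking partner of an $x$-active $u$ exists it is the code vertex $w(u) = u + b(p_u) + b(q_u) + b(p_u+q_u)$, and the key points are that (i) $w(u) \notin S$, since its coordinate in the direction $x \notin D$ equals $1$, so (using that distinguished directions were chosen for all of $C$) the pair $(p_{w(u)},q_{w(u)})$ is independent of everything conditioned on so far; (ii) two code vertices cannot both be adjacent to a common vertex at distance $2$ from each, so grouping the $x$-active vertices by the value of $w(u)$, at most one vertex of each group is blocked and the groups are independent; and (iii) within a group the total blocking probability is at most (group size)$\cdot 6/(d(d-1))$. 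Hence the number of blocked $x$-active vertices is a sum of independent indicators with mean at most $6N_x/(d(d-1)) < N_x/2$, and a Chernoff bound shows it is strictly less than $N_x$ for every $x$ except with probability below $\exp(-2^{ck/3})$. The $6$-cube swaps around vertices of $G$ cannot interfere with any of this, since $H$ avoids the distance-$10$ neighbourhood of $G' \supseteq G$. Collecting the (few) failure events and recalling that $\card{S\inter H} \ge 2^{ck/2}$ itself fails with probability below $\exp(-2^{ck/3})$, the bound follows.

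The routine parts are the coset computation and the first bullet. The real work is in the second bullet, and specifically in controlling the dependence introduced by the blocking rule of the construction: the crux is the observation that an $x$-active vertex and its blocking partner lie in different small cubes — so the partner's random directions are genuinely fresh relative to the data already exposed — together with the injectivity statement (ii). Once these are in hand everything reduces to Chernoff bounds, and the only remaining care is checking that all the error terms really do beat $\exp(-2^{ck/3})$ for the explicit constants $c = 45$ and $d_0 = 3000$.
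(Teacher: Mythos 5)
Your proof is correct, but it takes a genuinely different route from the paper's for the probabilistic part. The deterministic coset count is the same. The paper's proof sparsifies \(S\inter C\) to a subset \(C'\) whose points have pairwise distance at least \(21\); Claim~\ref{claim:woolly} then gives full mutual independence across \(u\in C'\) of the joint event ``\(u\in H\) and the swap at \(u\) is performed in directions \((x,y)\)'', and both bullets follow from crude union bounds and a \((1-d^{-2})^{\card{C'}}\)-type estimate. You instead work with all of \(S\inter H\) and handle the dependence introduced by the blocking rule head-on. The observation that makes your exposure argument go through --- that the blocking partner \(w(u)=u+b(p_u)+b(q_u)+b(p_u+q_u)\) of an \(x\)-active \(u\) differs from \(u\) in direction \(x\notin D\), hence lies outside \(S\), so that \((p_{w(u)},q_{w(u)})\) is fresh randomness once the \((p_v,q_v)\) for \(v\in S\inter C\) have been exposed --- is exactly right, as is the grouping trick (a vertex at distance \(2\) from a codeword is adjacent to at most one other codeword, so at most one member of each \(w\)-group can be blocked, and the groups are governed by disjoint \((p_w,q_w)\)'s). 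The paper's sparsification is cleaner and its Claim~\ref{claim:woolly} is reusable (it is quoted again in spirit in \Cref{lem:large-connections}), at the price of a roughly \(d^{-21}\) loss in the size of the working set; your argument keeps the full \(\card{S\inter H}\) but requires a more delicate conditioning. Both routes beat \(\exp\p[\big]{-2^{ck/3}}\) with room to spare for \(c=45\), \(d\geq 3000\). Two small points of hygiene: the blocked count is a sum of independent indicators with heterogeneous success probabilities, so \Cref{thm:chernoff} as stated (binomial only) does not quite apply --- either invoke the general Chernoff--Hoeffding form, or more simply note that ``all \(x\)-active vertices blocked'' forces every \(w\)-group to be a singleton, in which case the probability is at most \(\prod_g \prob{Z_g=1}\leq(6/d(d-1))^{N_x}\) directly; and the phrase ``its coordinate in direction \(x\) equals \(1\)'' should read ``differs from \(u\)'s coordinate in direction \(x\)'', though the conclusion \(w(u)\notin S\) is of course unaffected.
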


\begin{proof}
    Assume that there is some point \(u\in S\inter C\).
    Unfolding definitions, this is equivalent to \(\varphi(u) = \sum_{x\in X} u_x x = 0\) in \(\FF_2^k\).

    We know that the vertices of \(S\) are sent by \(\varphi\) to some coset \(\varphi[S] = z + W\) of the subspace \(W\sseq \FF_2^k\) of dimension \(\l\) spanned by the directions of \(S\).
    As \(0\in z+W\), we know that \(z + W = W\).
    Thus, to choose a point \(v\in S\inter C\), we may first choose \(v_x\) arbitrarily for directions \(x\) outside the basis \(E\) of \(W\), and we know that there will be a unique choice of \(v_y\) for each \(y\in E\) such that \(v\in C\).
    Thus we see that \(\card{S\inter C} = 2^{ck - \l}\) deterministically, as required.

    For the final two points of the lemma, we will find a large subset of \(S\inter C\) such that the events of these points being in \(H\) and the directions in which edges are swapped around them are totally independent.
    
    \begin{claim}
        \label{claim:woolly}
        Let \(C' \sseq C\) have the property that any two points of \(C'\) are at distance at least \(21\).
        Then the events that \(u \in H\) and that edges in directions \((p_u, q_u) = (x,y)\) are swapped around \(u\) are independent over all choices of \(u \in C'\) and \(x,y\in X\).
    \end{claim}

    \begin{proof}
        These events for some fixed \(u\) depend on membership of \(G'\) for vertices within distance 10 of \(u\), and on \(p_v,q_v\) for points \(v\) within distance 3 of \(u\).

        Membership of \(G'\) and choice of directions \(p_v,q_v\) are independent, and so, if \(C'\) has all pairwise distances at least 21, then the events in question are entirely determined by disjoint sets of independent events, and so are independent.
    \end{proof}

    We now set \(L = 21\) for notational convenience.
    Noting that there are at most \(d^L\) points of \(C\) within distance \(L\) of some given point, we may greedily choose a subset \(C'\sseq C\) with \(\card{C'} \geq 2^{ck-\l} d^{-L}\) with all points of \(C'\) at pairwise distance at least \(L\).
    
    Each point \(u\in C'\) may (independently) fail to be in \(H\) if it is too close to a point of \(G'\).
    This event occurs with probability at most \(1 - d^{10} 2^{-d/10} \geq 1 - O(d^{-2})\).
    Moreover, no edges will be swapped around \(u\) if the directions \(p_u,q_u\) interfere with the directions from another point.
    This can only happen if the directions chosen at \(v\), the point of \(C\) adjacent to \(u + p_u + q_u\) (if it exists and is in \(H\)) are both amongst the three directions between \(u\) and \(v\).
    This has probability \(1 - O(d^{-2})\).

    Thus, noting that \(\card{C'}\geq 2^{(c-1-L)k}\), the probability that a particular set of \(2^{(c-1-L) k - 1}\) of these points fail to be in \(H\) is \(O(d^{-2^{(c-1-L)k}})\).
    Taking a union bound over at most \(2^{2^{(c-1-L)k}}\) choices of this set of points, we see that with probability at least \(1 - O(d^{-2^{(c-1-L)k}})\), we have
    \[\card{S\inter H} \geq 2^{(c-1-L)k - 1} \geq 2^{ck/2},\]
    where we have used that \(c = 45 = 2 L + 3\) in the above bound.

    Let \(S'\) be the small cube adjacent to \(S\) in direction \(x\).
    The directions chosen at the points of \(C'\) are independent, and the probability that a given point swaps a direction within the small cube for \(x\) is at least \(d^{-2}\).
    This only needs to happen at one point of \(C'\) to connect \(S\) and \(S'\).
    The probability that no vertex of \(S\inter H\) connects \(S\) and \(S'\) is thus at most
    \[ (1 - d^{-2})^{\card{S\inter H}} \leq \exp\p[\big]{-d^{-2}2^{ck/2}} \leq \exp\p[\big]{-2^{ck/3}},\]
    as required.
\end{proof}


\subsection{Connected components are large}
\label{subsec:large-components}

Now that we know that every small cube is connected, we work to combine these into larger connected components.
Recall that \(W\leq \FF_2^K\) is the linear span of \(D\), the set of directions spanned by small cubes.
We will show that each coset of \(W\) is connected.

Of course, for many choices of \(D\), the space \(W\) will be the whole of \(\FF_2^k\). In this case, any small cube \(S\) is sent by \(\varphi\) to \(\varphi[S]=W\), which certainly contains \(0\). But then every small cube intersects \(C\), so almost certainly any two adjacent small cubes are connected by an edge of \(\cN\).

We have to work more carefully to treat those \(D\) for which \(W\) is a proper subspace of \(\mathbb{F}_2^k\). In particular, we will have to track when exactly we do have \(0\in \varphi[S]\) in order to connect the small cubes together. Let \(U = W^\perp \cong \FF_2^{k-\l}\) be the orthogonal complement of \(W\) in \(\FF_2^k\). So now viewing \(U\) as \(\FF_2^k/W\), we see that the result of Lemma \ref{lem:adjacent-connected} holds exactly when \(\varphi[S]=0\in U\). We will check this formally later.

In the proof that follows, we will track the behaviour of the special case where \(D\) spans \(\mathbb{F}_2^k\) as a kind of worked example.

For \(s\in U\), define the set \(L_s\sseq\FF_2^k\) as follows.
\[L_s \defined \set[\big]{(x,t)\in W \times U \cong \FF_2^k \st t = s}.\]
So viewing \(U,W\) as subspaces of \(\FF_2^k\), this is just saying that \(L_s\) is the coset \(s+W\). When \(D\) spans the whole of \(\FF_2^k\), this is just the whole of \(\FF_2^k=W\).

We will build our large components out of some intermediate sets, which we define after introducing some more notation.
Let \(Q_{U^*} \defined \FF_2^{U\setm{0}}\) be the hypercube with directions indexed by \(U^* = U\setm{0}\). 
When \(D\) spans the whole of \(\FF_2^k\), these are rather trivial notions: \(U^*\) is the empty set, and so \(Q_{U^*}\) has only one vertex.

In much the same way as we can think of elements of \(Q_X\) as functions from \(X\) to \(\FF_2\), or as vectors indexed by \(X\), we can think of an element \(f\in Q_{U^*}\) as either a function \(f\from U^* \to \FF_2\), or as a vector indexed by \(U^*\).
Similarly to the definition of \(\varphi\), we can define the function \(\psi \from Q_{U^*} \to U \cong \FF_2^{k-\l}\) as
\[\psi(f) \defined \sum_{t \in U} f_t t = \sum_{t \in U^*} f_t t.\]
Our large connected components will be built out of the following sets.
For each \(f\in Q_{U^*}\), define
\[
T_f \defined \set[\Big]{u \in V(Q_X) \st \forall t \in U^*, \; \sum_{z \in L_t} u_z = f_t \; \text{ in } \; \FF_2}.
\]

So for those sets \(D\) that span the whole of \(\FF_2^k\), \(T_f\) is the whole of \(V(Q_X)\).
Note that in general, the sets \(T_f\) partition \(Q_X\) into \(2^{\card{U^*}}\) sets. 
In particular, this is small compared to the size of \(Q_X\), and so we expect these sets \(T_f\) to be large.

We now prove that each \(T_f\) is connected, assuming that the conclusions of \Cref{cor:small-connected} and \Cref{lem:adjacent-connected} hold deterministically.
It will in fact follow from our proof that we can group the sets \(T_f\) together into slightly larger connected components, but we do not pursue this as the sets \(T_f\) are already large enough for the results of \Cref{subsec:connecting} to deal with.

Before proceeding, we make the following observation, which follows from the fact that, if \(u\) varies over the vertices of \(S\), then \(u_z\) is constant for fixed \(z\in L_t\) for \(t \in U^*\).

\begin{observation}
    For a given small cube \(S\in\cS_\cN\), there is a unique \(f\in Q_{U^*}\) such that \(S\sseq T_f\).
\end{observation}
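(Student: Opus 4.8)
The plan is to unwind the definitions of \(W\), \(U^*\), \(L_t\) and \(T_f\) and exploit the single fact that every direction of a small cube lies in \(W\). First I would record that, since \(W\) is by definition the linear span of \(D\), we have \(D\sseq W = L_0\); hence for every \(t\in U^* = U\setm{0}\) the coset \(L_t = t + W\) is disjoint from \(D\). This is the only place the choice of \(W\) enters.

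Next, fix a small cube \(S\in\cS_\cN\) and a direction \(z\in L_t\) for some \(t\in U^*\). Every edge of \(S\) lies in a direction of \(D\), and traversing such an edge flips exactly one coordinate \(u_x\) with \(x\in D\); since \(z\notin D\), the coordinate \(u_z\) is unchanged along each edge of \(S\), so by connectedness of \(S\) it is constant over all \(u\in S\) (this also covers the case \(z\notin X\), where \(u_z\equiv 0\) by our convention). Summing over \(z\in L_t\), the quantity \(\sum_{z\in L_t} u_z\in\FF_2\) is therefore constant on \(S\); call this constant \(f_t\). Letting \(t\) range over \(U^*\) defines a vector \(f\in Q_{U^*}\), and by construction every vertex of \(S\) satisfies the defining condition of \(T_f\), i.e.\ \(S\sseq T_f\). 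For uniqueness I would invoke the already-noted fact that \(\set{T_f \st f\in Q_{U^*}}\) partitions \(V(Q_X)\): since \(S\) is nonempty it meets exactly one block, so there is exactly one \(f\) with \(S\inter T_f\neq\emptyset\), and a fortiori exactly one with \(S\sseq T_f\).

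I do not expect a genuine obstacle here; the statement is essentially a bookkeeping lemma. The only thing requiring care is keeping the three ambient \(\FF_2\)-vector spaces straight — the index set \(X\sseq\FF_2^k\), the hypercube \(Q_X=\FF_2^X\), and the pair \(W\le\FF_2^k\), \(U\cong\FF_2^k/W\) — together with the convention \(u_z = 0\) for inactive \(z\). The one point I would state explicitly is why the directions of \(S\) all lie in \(W\), since that (and nothing else) is what forces the sums \(\sum_{z\in L_t}u_z\) to be constant on \(S\).
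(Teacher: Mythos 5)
Correct, and essentially the same argument as the paper's. The paper's one-line justification is precisely that \(u_z\) is constant over the vertices of \(S\) for each fixed \(z\in L_t\) with \(t\in U^*\); you supply the short reason for this (namely \(D\sseq W = L_0\), so \(L_t\) is disjoint from \(D\) for \(t\neq 0\), and edges of \(S\) only flip coordinates in \(D\)), and you handle uniqueness via the already-noted fact that the sets \(T_f\) partition \(V(Q_X)\). No gaps.
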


This is to say, the partition \(\cS_\cN\) of the vertices of \(Q_X\) into small cubes is a refinement of the partition into \(T_f\).
We now prove a result concerning when small cubes intersect the Hamming code.

\begin{lemma}
    \label{lemma:f-determines-code-intersection}
    For any small cube \(S\in\cS_\cN\), we have \(\varphi[S] = \psi(f) + W\), and moreover \(S\inter C\) is non-empty if and only if \(\psi(f) = 0\).
\end{lemma}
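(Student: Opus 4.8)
The plan is to unwind the definitions of $\varphi$, $\psi$, and $T_f$ and relate the value of $\varphi$ on a small cube to the combinatorial data $f$. Fix a small cube $S\in\cS_\cN$, and let $f\in Q_{U^*}$ be the unique element with $S\sseq T_f$ (guaranteed by the preceding observation). First I would compute $\varphi(u)$ for an arbitrary $u\in S$ by splitting the defining sum $\varphi(u)=\sum_{x\in X}u_x x$ according to which coset $L_t$ (for $t\in U$) the direction $x$ lies in; recall $\{L_t : t\in U\}$ partitions $\FF_2^k$ into the cosets of $W$, and $X\sseq\FF_2^k\setm 0$, so $\varphi(u)=\sum_{t\in U}\big(\sum_{x\in X\inter L_t}u_x x\big)$. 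Now project to $U\cong\FF_2^k/W$: modulo $W$ every $x\in L_t$ is congruent to $t$, so the image of $\varphi(u)$ in $U$ is $\sum_{t\in U}\big(\sum_{x\in X\inter L_t}u_x\big)\,t$. By the definition of $T_f$ (and using the convention that $u_z=0$ for inactive $z$, so the inner sum over $z\in L_t$ equals the sum over $x\in X\inter L_t$), the coefficient $\sum_{x\in X\inter L_t}u_x$ equals $f_t$ for $t\in U^*=U\setm 0$; the $t=0$ term is killed under the projection to $U$ anyway. Hence the image of $\varphi(u)$ in $U$ is exactly $\sum_{t\in U^*}f_t t=\psi(f)$.

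This shows $\varphi(u)\in\psi(f)+W$ for every $u\in S$, i.e.\ $\varphi[S]\sseq\psi(f)+W$. For the reverse inclusion, note (as established in the proof of \Cref{lem:adjacent-connected}) that $\varphi[S]$ is a coset of $W$ — indeed $\varphi$ is a group homomorphism and the vertices of $S$ form a coset of the subgroup $\langle b(x):x\in D\rangle$, whose image under $\varphi$ is the span of $D$, which is $W$. Two cosets of $W$ that intersect are equal, so $\varphi[S]=\psi(f)+W$, giving the first claim.

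For the second claim: $S\inter C\neq\emptyset$ means some $u\in S$ has $\varphi(u)=0$, i.e.\ $0\in\varphi[S]=\psi(f)+W$, which holds if and only if $\psi(f)\in W$; since $\psi(f)\in U=W^\perp$ and $U\inter W=\{0\}$, this is equivalent to $\psi(f)=0$. That completes the proof. I do not expect a serious obstacle here — the statement is essentially a bookkeeping identity — but the one point requiring care is keeping straight the three ``coordinate systems'' ($\FF_2^k$, the quotient $U\cong\FF_2^k/W$ identified with the subspace $W^\perp$, and the index set $U^*$ of $Q_{U^*}$), and in particular verifying that passing from $\sum_{z\in L_t}u_z$ (sum over all of $L_t\sseq\FF_2^k$, with inactive coordinates zero) to $\sum_{x\in X\inter L_t}u_x$ is legitimate, and that the $t=0$ coset contributes nothing after projecting modulo $W$.
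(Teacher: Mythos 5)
Your proof is correct and follows essentially the same route as the paper's: split the sum defining $\varphi(u)$ over the cosets $L_t$, use the defining relation of $T_f$ to identify the $U$-components with $f_t$, conclude $\varphi[S]\subseteq\psi(f)+W$, and upgrade to equality since $\varphi[S]$ is itself a $W$-coset. The only cosmetic difference is that you project to $U\cong\FF_2^k/W$ directly, whereas the paper carries out the same computation in $\FF_2^k$ by writing each $x\in L_t$ as $x|_W+t$; you also spell out the reverse inclusion that the paper leaves implicit.
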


\begin{proof}
    Note first that \(U,W\leq \FF_2^k\) allows the sum \(\psi(f) + W\) to be interpreted in \(\FF_2^k\).

    Take some \(u\in S\sseq T_f\).
    We have that (under the interpretation \(u_x = 0\) for \(x \in \FF_2^k \setminus X\)),
    \begin{align*}
        \varphi(u) 
        = \sum_{x\in X} u_x x 
        &= \sum_{t \in U} \sum_{x \in L_t} u_x x 
        = \sum_{t \in U} \p[\bigg]{\p[\Big]{\sum_{x\in L_t} u_x \; x|_W} + \p[\Big]{\sum_{x\in L_t} u_x}t} \\
        &= \sum_{t \in U} (y_t + f_t t)
        = \psi(f) + \sum_{t \in U} y_t,
    \end{align*}
    for \(y_t = \sum_{x\in L_t} u_x \; x|_W \in W\), which implies that \(\varphi(u) \in \psi(f) + W\), and so \(\varphi[S]\) is indeed equal to \(\psi(f) + W\).
    Thus \(S\) intersects the Hamming code \(C\) if and only if \(0\in \psi(f) + W\), i.e.\ if and only if \(\psi(f) = 0\), as required.
\end{proof}

We will abuse notation, and write \(\psi(u)\) for \(\psi(f)\), where \(f\in Q_{U^*}\) is the unique value such that \(u \in T_f\).
Recall that, due to \Cref{lem:adjacent-connected}, if \(S\) intersects \(C\) in a point, then the intersection is in fact large.

\begin{lemma}
    \label{lem:big-components}
    Assume that the conclusions of \Cref{cor:small-connected} and \Cref{lem:adjacent-connected} hold deterministically.
    Then \(T_f\) is connected by edges of \(\bigcup \cN\).
\end{lemma}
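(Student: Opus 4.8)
The plan is to show that the sets $T_f$ are connected by a ``bootstrapping'' argument over the structure of $U = W^\perp$, using the small cubes as building blocks and the results of \Cref{cor:small-connected} and \Cref{lem:adjacent-connected}. Fix $f \in Q_{U^*}$ and consider the set $T_f$, which is a union of small cubes all of which have $\varphi$-image equal to the coset $\psi(f) + W$ by \Cref{lemma:f-determines-code-intersection}. Two small cubes $S, S' \subseteq T_f$ are adjacent exactly when they differ by moving in a single direction $x \in X \setminus D$ (a direction not spanning the small cube, since moving within $D$ stays inside a single small cube); the set of small cubes inside $T_f$, with this adjacency, is itself isomorphic to a hypercube $Q_{X'}$ where $X'$ is (roughly) the set of directions of $X$ not in $D$, modulo the relations imposed by $T_f$. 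So the first step is to set up this ``quotient hypercube'' of small cubes cleanly and observe that it suffices to connect adjacent small cubes within $T_f$.

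Next I would split into two cases according to whether $\psi(f) = 0$ or not. If $\psi(f) = 0$, then by \Cref{lemma:f-determines-code-intersection} every small cube $S \subseteq T_f$ meets $C$, hence by \Cref{lem:adjacent-connected} (assumed to hold deterministically) every small cube is joined by an edge of $\cN$ to each of its neighbours; combined with \Cref{cor:small-connected} (each small cube is internally connected), this immediately gives that $T_f$ is connected. The harder case is $\psi(f) \neq 0$: now no small cube in $T_f$ meets $C$, so \Cref{lem:adjacent-connected} gives us nothing directly, and we must instead route between small cubes of $T_f$ by passing through an adjacent set $T_{f'}$ with $\psi(f') = 0$. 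The key observation here is that $\psi$ is a group homomorphism $Q_{U^*} \to U$, so for any $f$ with $\psi(f) \neq 0$ we can find $f'$ differing from $f$ in a single coordinate $t \in U^*$ (choosing $t$ with $\psi(f) = t$ is possible precisely because $U^* = U \setminus \{0\}$ and $\psi(e_t) = t$) so that $\psi(f') = 0$. Moving in a direction $z \in L_t \cap X$ carries a small cube in $T_f$ to an adjacent small cube in $T_{f'}$; and $T_{f'}$ is connected by the first case. So a path in $T_f$ from $S_1$ to $S_2$ can be built: go from $S_1$ into the ``good'' set $T_{f'}$, travel through $T_{f'}$, and come back out to $S_2$. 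One must check that such a direction $z \in L_t \cap X$ actually exists --- this is where the careful choice of $X$ with $F \subseteq X$ comes in, and I expect this to be the main obstacle.

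To elaborate on that obstacle: $L_t = t + W$ is a coset in $\FF_2^k$, and we need it to contain an \emph{active} direction, i.e.\ an element of $X$. The definition of $X$ guaranteed that $F$ --- the set of all odd-weight elements of $\FF_2^k \setminus \{0\}$ --- is contained in $X$. So it suffices to show that every coset $t + W$ with $t \neq 0$ in $U = W^\perp$ contains an odd-weight vector. Here we can use that $W \leq \FF_2^k$ and $t \notin W$ (as $0 \neq t \in W^\perp$ and $W \cap W^\perp$ need not be trivial over $\FF_2$ --- so I would instead argue via the weight/parity functional directly). Concretely, the all-ones vector $\mathbf{1} \in \FF_2^k$ has $\langle \mathbf{1}, w \rangle = \mathrm{wt}(w) \bmod 2$; the coset $t + W$ contains a vector of a given parity unless the parity is constant on $t + W$, which happens only if $\mathbf{1} \perp W$, i.e.\ $\mathbf{1} \in W^\perp = U$, and even then $t + W$ has all its vectors of parity $\mathrm{wt}(t) + (\text{const})$. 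I would show that, regardless, for each relevant $t$ one of $t + W$ or an alternative single-coordinate move lands in $F \subseteq X$; a short parity argument handles this, possibly using that $\mathbf{1}$ itself can be taken as one of the coordinates of $U^*$ when $\mathbf{1} \in U$. Once the existence of the connecting direction $z$ is established, the remaining bookkeeping --- that moving in direction $z$ really does map a small cube of $T_f$ to an adjacent one in $T_{f'}$, and that the resulting walk through small cubes can be realised by actual edges of $\bigcup \cN$ using \Cref{cor:small-connected} and \Cref{lem:adjacent-connected} --- is routine. I would close by noting that connecting all adjacent pairs of small cubes within $T_f$, together with internal connectivity of each small cube, yields connectivity of $T_f$.
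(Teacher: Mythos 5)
Your strategy matches the paper's: dispose of the case $\psi(f) = 0$ by noting that every small cube in $T_f$ then meets $C$, and so by \Cref{lem:adjacent-connected} is connected to all of its neighbours; and for $\psi(f) \neq 0$, route through small cubes on which $\psi$ vanishes, using $F \subseteq X$ to supply a connecting direction. Your reformulation of the second case---step once into a single fixed $T_{f'}$ with $\psi(f') = 0$, traverse $T_{f'}$ using its connectivity as a black box, and step back out---is a genuine tidying of the paper's argument, which instead builds an explicit vertex sequence $u_0, \dotsc, u_m$ with $\psi(u_i) = 0$ at the interior vertices and needs a separate parity step (pairing a direction $y \notin W$ with a second direction $z$ in the same coset $y+W$) to preserve that invariant along the way. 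Both routes, however, hinge on precisely the same ingredient: some $z \in L_{\psi(f)} \cap X$ must exist.

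That ingredient is where your proposal stops short, and you are right to flag it as the main obstacle. Your parity analysis correctly shows that $\psi(f) + W$ contains an odd-weight (hence active, via $F \subseteq X$) vector whenever $W$ contains an odd-weight vector, and also whenever $W \subseteq \mathbf{1}^\perp$ but $\psi(f)$ has odd weight. But the remaining case---$W \subseteq \mathbf{1}^\perp$ (i.e.\ every element of $D$ has even weight, which is possible once $d - 2^{k-1} \geq ck$) together with $\psi(f)$ even-weight and nonzero---is handed off to an unspecified ``alternative single-coordinate move,'' and this case is not vacuous: there $\psi(f) + W$ is a coset consisting entirely of even-weight vectors, so it is disjoint from $F$, and nothing in the definitions forces it to meet $X \setminus F$, whose elements the paper explicitly allows to be chosen arbitrarily. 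The paper simply asserts that every $L_s$ contains an active element, citing only $F \subseteq X$; your parity analysis shows that this assertion does not follow automatically, so your hesitation is warranted. To complete the proof one would need either to close this case directly or to show that such an $s$ cannot arise as $\psi(f)$ for a nonempty $T_f$.
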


\begin{proof}
    Given two points \(u,v\in T_f\), we prove that there is a path between them using edges of \(\bigcup \cN\), assuming only that all small cubes are connected, and that two adjacent small cubes are connected provided that one of them has non-empty intersection with the Hamming code \(C\).

    As small cubes are connected, we may assume that \(u\) and \(v\) agree on directions in \(D\), so it suffices to find a path covering the other directions.
    We will construct a sequence of points \(u = u_0,u_1,\dotsc,u_{m-1},u_m = v\), where there is a path from each \(u_i\) to \(u_{i+1}\), and for all \(1\leq i \leq m-1\) we have \(\psi(u_i) = 0\).

    If \(\psi(f) = 0\), then by \Cref{lemma:f-determines-code-intersection} every small cube in \(T_f\) intersects \(C\), and thus by assumption is connected to every adjacent small cube; in this case we will take \(u_1 = u_0 = u\) and \(u_{m-1} = u_m = v\).
    
    If \(\psi(f) \neq 0\), then we first claim that \(S_u\) is adjacent to a small cube \(S_{u_1}\), where \(\psi(u_1) = 0\).
    Indeed, we know in this case that \(\psi(f)\in U^*\). Due to the definition of the Hamming code \(C\), and in particular the fact that the set \(F\) of odd elements of \(\FF_2^k\) has \(F\sseq X\), there are active elements of \(\FF_2^k\) in \(L_s\) for every \(s\in U\).
    Thus, in particular, we can fix some active \(x\in L_{\psi(f)}\inter X\), and let \(u_1 = u_0 + b(x)\) (recalling that \(b\) is the map that sends \(x\in X\) to the basis element of \(Q_X = \FF_2^X\) in direction \(x\)), so \(\psi(u_1) = \psi(u_0) + x|_U = 0\), as required.

    Now, given \(u_i\), we construct \(u_{i+1}\), which will differ from \(u_i\) in exactly one or two directions.
    Take some direction \(y\in X\) with \(y\neq x\) in which \(u_i\) and \(v\) differ, noting that if this direction does not exist then we are done.
    
    If \(y \in W\), then let \(u_{i+1} = u_i + b(y)\) and note that \(\psi(u_{i+1}) = \psi(u_i) + y|_U = 0\).
    The small cubes \(S_{u_i}\) and \(S_{u_{i+1}}\) are thus connected to each other, and so there is a path in \(\bigcup \cN\) from \(u_i\) to \(u_{i+1}\), as required.
    
    Otherwise, \(y \notin W\), and note that, due to parity considerations, there must be some other direction \(z\neq x\) with \(z + W = y + W\) in which \(u_i\) and \(v\) differ.
    Let \(u_{i+1} = u_i + b(y) + b(z)\), and note that 
    \[\psi(u_{i+1}) = \psi(u_i) + y|_U + z|_U = \psi(u_i) = 0.\]
    Thus both \(S_{u_i}\) and \(S_{u_{i+1}}\) are connected to all neighbouring small cubes; in particular, they are both connected to \(S_{u_i+b(y)}\).
    Therefore there are paths from \(u_i\) and \(u_{i+1}\) to \(u_i + b(y)\), and thus there is a path from \(u_i\) to \(u_{i+1}\), as required.

    Finally, we can use direction \(x\) again if necessary to find a path from \(u_{m-1}\) to \(v\).
    Connecting these paths together in sequence produces a path connecting \(u\) and \(v\), and proving that \(T_f\) is connected, as required.
\end{proof}

Finally, we note that the sets \(T_f\) are indeed large.

\begin{lemma}
    \label{lem:big-components-are-big}
    For all \(f\in Q_{U^*}\), we have
    \[\card{T_f} \geq 2^{d(1 - 2/\log_2 d)}.\]
\end{lemma}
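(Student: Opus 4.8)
The plan is to count directly. Recall that the sets $T_f$ partition $V(Q_X) = \FF_2^X$ into $2^{\card{U^*}}$ pieces, so $\card{T_f} = 2^d / 2^{\card{U^*}}$ provided each $T_f$ is nonempty — equivalently, provided the linear map $u \mapsto \p[\big]{\sum_{z\in L_t} u_z}_{t\in U^*}$ from $\FF_2^X$ to $\FF_2^{U^*}$ is surjective. The key point enabling surjectivity is the same structural fact already used in the proof of \Cref{lem:big-components}: since $F \sseq X$ (all odd elements of $\FF_2^k\setm 0$ are active), each coset $L_t = t + W$ contains an active element, so each of the $\card{U^*}$ coordinate functionals can be controlled by a direction lying in that coset, and these directions lie in distinct cosets, hence are distinct. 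This gives $\card{T_f} = 2^{d - \card{U^*}}$ exactly.

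It then remains to bound $\card{U^*} = \card{U} - 1 = 2^{k-\l} - 1$ from above. I would use $k = \ceil{\log_2(d+1)} \leq \log_2 d + 1$ (valid for $d \geq d_0$, since then $\log_2(d+1) \leq \log_2 d + 1$), and I would use $\l \geq \log_2 \card{D} = \log_2(ck)$ — this holds because $W$ is spanned by $D$, so $\card{W} = 2^\l \geq \card{D} = ck$. Hence
\[
\card{U^*} \;\leq\; 2^{k - \l} \;\leq\; \frac{2^{k}}{ck} \;\leq\; \frac{2\cdot 2^{\log_2 d}}{ck} \;=\; \frac{2d}{ck}.
\]
With $c = 45$ and $k \geq \log_2 d \geq \log_2 d_0 > 11$, we get $\card{U^*} \leq 2d/(ck) \leq 2d/\log_2 d$ comfortably (in fact much smaller), so $\card{T_f} = 2^{d - \card{U^*}} \geq 2^{d - 2d/\log_2 d} = 2^{d(1 - 2/\log_2 d)}$, as claimed.

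The main (and really only) obstacle is getting the surjectivity argument clean: one must verify that choosing, for each $t \in U^*$, a single active direction $x^{(t)} \in L_t \inter X$ and setting $u = \sum_{t} f_t\, b(x^{(t)})$ indeed realises the prescribed values, i.e. $\sum_{z \in L_s} u_z = f_s$ for every $s \in U^*$. This follows because $x^{(t)} \in L_t$ and the $L_t$ are pairwise disjoint, so the only nonzero contribution to $\sum_{z \in L_s} u_z$ comes from the term $f_s\, b(x^{(s)})$; everything else in the numerics (the bounds on $k$ and $\l$) is routine arithmetic once one has the exact formula $\card{T_f} = 2^{d-\card{U^*}}$. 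I would present it in that order: first the disjointness/surjectivity giving the exact count, then the two elementary bounds $k \leq \log_2 d + 1$ and $\l \geq \log_2(ck)$, then combine.
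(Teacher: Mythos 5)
Your proof takes essentially the same route as the paper, and is in fact more rigorous on the key step: the paper's own proof simply writes $\card{T_f}=2^d/\card{Q_{U^*}}$ ``by symmetry'' and then computes, whereas you correctly observe that this equality requires every $T_f$ to be nonempty, i.e.\ the linear map $u\mapsto\p[\big]{\sum_{z\in L_t}u_z}_{t\in U^*}$ to be surjective, and you verify this by picking one active direction $x^{(t)}\in L_t\cap X$ per coset (using the disjointness of the $L_t$). This is exactly the structural fact the paper invokes, with the same justification $F\sseq X$, in the proof of \Cref{lem:big-components}. The arithmetic afterwards is a minor variant of the paper's ($2^\l\ge ck$ and $2^k\le 2d$ versus the paper's implicit $2^\l\ge\log_2 d$ and $2^k\le 2(d+1)$) and is fine.

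One caveat, given that you correctly flag surjectivity as ``the main obstacle'': the inference ``$F\sseq X$, hence each coset $L_t=t+W$ contains an active direction'' is immediate when $W$ contains an odd vector, since then every coset of $W$ contains an odd, hence nonzero, hence active element. But if $D$ consists entirely of even directions, $W$ is all-even, and an even coset of $W$ other than $W$ itself contains no odd vectors at all; it then has an active element only if some even element of $X\setminus F$ lands in it, which $F\sseq X$ by itself does not guarantee. This subtlety is shared with the paper (the same claim appears, with the same justification, in its proof of \Cref{lem:big-components}), so it is not something you introduced; but since you have isolated the surjectivity as the entire content of the lemma, this is precisely the case your argument leaves open.
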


\begin{proof}
    By symmetry, we know that
    \begin{align*}
        \card{T_f} 
        = \frac{2^d}{\card{Q_{U^*}}} 
        = 2^{d - 2^{k-\l} + 1}
        \geq 2^{d - 2(d+1)/\log_2 d + 1}
        \geq 2^{d(1 - 2/\log_2 d)},
    \end{align*}
    as required.
\end{proof}


\subsection{Connecting the large components}
\label{subsec:connecting}

We prove the following lemma.

\begin{lemma}
    \label{lem:large-connections}
    If \(f\in Q_{U^*}\) is such that \(T_f\inter C \neq \emptyset\), then with probability at least \(1 - \exp\p[\big]{-2^{d/2}}\), for every \(g\in Q_{U^*}\) within a distance 3 of \(f\), \(T_f\) is connected to \(T_g\).
\end{lemma}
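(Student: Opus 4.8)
The plan is to exploit the large perturbations around vertices of $G$ to bridge between $T_f$ and $T_g$ whenever $f$ and $g$ differ in at most three coordinates of $Q_{U^*}$. Recall that $T_f \cap C \neq \emptyset$ means, by \Cref{lemma:f-determines-code-intersection}, that $\psi(f) = 0$, so that $T_f$ contains many small cubes meeting the Hamming code; and by \Cref{lem:big-components-are-big}, $T_f$ itself is exponentially large. Since $G' \sseq C$ is obtained by retaining each point of $C$ independently with probability $2^{-d/10}$ and $G$ is obtained by a modest thinning, we expect $T_f$ to contain exponentially many points of $G$ — the first step is to make this precise. Concretely, I would show that $\card{T_f \cap C} \geq 2^{d(1 - 2/\log_2 d) - k}$ or so (combining \Cref{lem:big-components-are-big} with \Cref{lemma:f-determines-code-intersection}, since within $T_f$ the code $C$ is a single coset-intersection pattern), and then that at least $2^{d/2}$ of these points survive into $G$, with failure probability at most $\exp(-2^{d/2})$ — this is a Chernoff estimate of exactly the same flavour as in \Cref{lem:adjacent-connected}, noting that the events ``$v \in G$'' are near-independent once we pass to a well-separated subcode, and each holds with probability bounded below by a constant (essentially $2^{-d/10}$ times a correction for the distance-14 thinning, which still leaves $\card{T_f \cap G} \gg 2^{d/2}$ in expectation since $\card{T_f \cap C}$ is doubly-exponentially larger).

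The second step is to observe that the $6$-cube of swaps around a vertex $v \in G$ permutes edges cyclically through six \emph{random} directions $r_v^{(1)}, \dotsc, r_v^{(6)}$, and that this permutation moves $v$ (and its neighbours) to new $T$-classes: for any $g$ with $g - f$ supported on at most three coordinates $t_1, t_2, t_3 \in U^*$, there is a choice of the six directions — namely picking, for each $t_i$ we need to flip, an active element $x_i \in L_{t_i} \cap X$, which exists by the defining property $F \sseq X$ of our Hamming code — such that the $6$-cube of swaps around $v$ creates an edge of $\bigcup\cN$ joining a vertex of $T_f$ to a vertex of $T_g$. (One flips the parity $\sum_{z \in L_{t_i}} u_z$ by traversing an edge in direction $x_i$; moving within a $6$-cube gives room to do this for up to three values of $t_i$ at once, and the remaining directions of the $6$-cube can be absorbed harmlessly.) The probability that a given $v \in G$ has its six random directions chosen to realise this particular bridge to $T_g$ is at least $d^{-6}$, say.

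The third step is a union bound over the (constantly many, at most $\binom{\card{U^*}}{3} + \binom{\card{U^*}}{2} + \card{U^*} + 1 = \mathrm{poly}(d)$) choices of $g$ within distance $3$ of $f$: for each such $g$, the probability that \emph{no} vertex of $T_f \cap G$ realises a bridge to $T_g$ is at most $(1 - d^{-6})^{\card{T_f \cap G}} \leq \exp(-d^{-6} 2^{d/2}) \leq \exp(-2^{d/3})$, using the independence of the direction-choices $r_v^{(\cdot)}$ across distinct $v \in G$ (which are genuinely independent, by construction, and the $6$-cubes around distinct points of $G$ are edge-disjoint since $G$ is $14$-separated). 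Multiplying through the $\mathrm{poly}(d)$ targets and absorbing into the exponent gives the claimed bound $1 - \exp(-2^{d/2})$ for $d \geq d_0$. The main obstacle I anticipate is the bookkeeping in the second step: verifying carefully that a single $6$-cube of swaps around $v$ genuinely produces an edge of $\bigcup\cN$ between $T_f$ and $T_g$ when $g - f$ has weight up to $3$ (rather than just weight $1$), and that the edge in question does indeed lie in $\cN$ — i.e. in one of the directions of $D$ — so that it is usable for connectivity; this requires matching the cyclic structure of the $6$-cube permutation against the three parity-flips we want and checking that at least one surviving edge connects the two classes. Everything else is a routine Chernoff-plus-union-bound argument in the style already used above.
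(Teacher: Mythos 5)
Your proposal takes essentially the same route as the paper: both exploit the abundance of well-separated code points in $T_f$, observe that each survives into $G$ with probability roughly $2^{-d/10}$ and, conditional on that, bridges $T_f$ to $T_g$ with probability at least $d^{-6}$, and then finish with independence plus a $(1-p)^n$ estimate and a union bound over $g$. The only organisational difference is that you split the calculation into two stages (Chernoff to lower-bound $\card{T_f \cap G}$, then a bridging estimate conditioned on that), whereas the paper bundles \textquotedblleft $v \in G$ and the directions bridge\textquotedblright\ into a single event $B_v$; both are valid.

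One small arithmetic slip worth fixing: your chain gives $\mathrm{poly}(d)\cdot\exp(-d^{-6}\cdot 2^{d/2})\leq \mathrm{poly}(d)\cdot\exp(-2^{d/3})$, which is \emph{larger} than $\exp(-2^{d/2})$, not smaller, so the claimed bound doesn't follow as written. You gave away too much when you rounded $\card{T_f\cap G}$ down to $2^{d/2}$: the expectation is of order $2^{3d/4}\cdot 2^{-d/10}\approx 2^{13d/20}$, so your Chernoff step in fact gives $\card{T_f\cap G}\geq 2^{3d/5}$ (say) with failure probability far smaller than $\exp(-2^{d/2})$, and then $\mathrm{poly}(d)\cdot\exp(-d^{-6}2^{3d/5})\leq\exp(-2^{11d/20})<\exp(-2^{d/2})$ closes the gap. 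Also, a single edge of $\bigcup\cN$ changes the $U^*$-coordinate vector by weight at most one, so when $g-f$ has weight $2$ or $3$ you need a \emph{path} of length up to three inside the $6$-cube rather than a single bridging edge; you allude to this (``moving within a $6$-cube gives room\ldots''), and the paper is no more explicit about the bookkeeping here, but the phrase ``creates an edge\ldots joining a vertex of $T_f$ to a vertex of $T_g$'' should be read as ``creates a short path.''
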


\begin{proof}
    First note that if \(T_f\inter C\) is nonempty, then every small cube \(S\sseq T_f\) has \(\card{S\inter C} = 2^{ck - \l} \geq 2^{(c-1)k}\).
    This implies that
    \[\card{T_f\inter C} \geq 2^{-k} \card{T_f}  \geq 2^{d(1 - 2/\log_2 d) - k}.\]
    In particular, following similar lines to the proof of \Cref{lem:adjacent-connected}, for any constant \(L\) there is a subset \(C'\sseq T_f\inter C\) with points at pairwise distance at least \(L\) and 
    \[\card{C'} \geq 2^{d(1 - 2/\log_2 d) - k - L\log_2 d} > 2^{3d/4}.\]
    For sufficiently large \(L\), the events of points of \(C'\) being in \(G\) and the directions in which they swap edges are all independent.
    In particular, \(L=15\) suffices, and noting that \(k < 2\log_2 d\), we find that the above inequality holds for all \(d \geq 2\).

    For \(v\in C'\), let \(B_v\) be the event that \(v\in G\) and the swapping of edges around \(v\) results in a path from \(T_f\) to \(T_g\) (noting that every element of \(T_f\) is within distance 3 of some point of \(T_g\)).
    Recall that a point \(v\in C\) is in \(G\) if it is selected to be in \(G'\) (which happens with probability \(2^{-d/20}\)), and no point other within distance 10 of \(v\) is selected to be in \(G'\).
    Given that \(v\in G\), we see that the connection of \(T_f\) and \(T_g\) occurs with probability at least \(d^{-6}\).
    The event \(B_v\) thus occurs with probability
    \[\prob{B_v} \geq 2^{-d/20} (1 - 2^{-d/20})^{d^{10}} d^{-6}
    \geq 2^{-d/20 - 2^{1-d/20} d^{10} -6 \log_2 d}
    > 2^{-d/10},\]
    where we have assumed that \(d\) is sufficiently large.
    In particular, \(2^{1-d/20} d^9 < 1/40\) for \(d \geq 2115\), and \(6 \log_2 d < d/40\) for \(d \geq 2741\), so \(d \geq 3000\) suffices.
    Moreover, these events are independent, and so the probability that none of them occur is at most
    \[(1 - 2^{-d/10})^{2^{3d/4}} < \exp(-2^{-d/10})^{2^{3d/4}} < \exp\p[\big]{-2^{d/2}},\]
    from which the required result follows immediately.
\end{proof}

\begin{corollary}
    \label{cor:all-large-connections}
    With probability at least \(1 - \exp\p[\big]{-2^{d/4}}\), for any \(f,g\in Q_{U^*}\) there is a path from \(T_f\) to \(T_g\), under the assumption that each \(T_h\) is connected.
\end{corollary}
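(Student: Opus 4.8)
The plan is to combine \Cref{lem:large-connections} with the observation that the family of cells $f$ for which $T_f$ meets the Hamming code is itself a \emph{perfect} code inside $Q_{U^*}$, so that a covering radius argument chains all of the $T_f$ together. By \Cref{lemma:f-determines-code-intersection}, $T_f\inter C\neq\emptyset$ exactly when $\psi(f)=0$, so the relevant cells form the subspace $\ker\psi\leq Q_{U^*}$. Since the standard basis vector of $Q_{U^*}$ in direction $t\in U^* = U\setm{0}$ is sent by $\psi$ to $t$, the columns of the ``parity check map'' $\psi$ are exactly the nonzero elements of $U\cong\FF_2^{k-\l}$, each occurring once; hence $\ker\psi$ is the Hamming code of redundancy $k-\l$, a perfect single-error-correcting code, and in particular it has covering radius $1$. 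In the degenerate case $\l=k$ we have $U^*=\emptyset$, there is a single set $T_f=V(Q_X)$, and the corollary is immediate, so assume $\l<k$ henceforth.

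Next I would fix the good event. Let $\cE$ be the event that the conclusion of \Cref{lem:large-connections} holds simultaneously for every $f\in Q_{U^*}$ with $\psi(f)=0$. There are at most $\card{Q_{U^*}} = 2^{2^{k-\l}-1} = 2^{O(d)}$ such $f$ (using $2^k = O(d)$), so a union bound over \Cref{lem:large-connections} gives $\prob{\cE}\geq 1 - 2^{O(d)}\exp\p{-2^{d/2}} \geq 1 - \exp\p{-2^{d/4}}$ for $d\geq d_0$. For the remainder of the argument I would work deterministically on $\cE$, and also use the stated hypothesis that every $T_h$ is connected. On $\cE$, for every codeword $c\in\ker\psi$ and every $g\in Q_{U^*}$ at distance at most $3$ from $c$, the set $T_c$ is connected to $T_g$ in $\bigcup\cN$.

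Finally I would chain these connections along a path. Given arbitrary $f,g\in Q_{U^*}$, fix a path $f = p_0,p_1,\dotsc,p_m = g$ in $Q_{U^*}$, and for each $i$ pick a codeword $\gamma_i\in\ker\psi$ with $\mathrm{dist}(p_i,\gamma_i)\leq 1$, which exists since the code has covering radius $1$; note $\mathrm{dist}(f,\gamma_0)\leq 1$ and $\mathrm{dist}(g,\gamma_m)\leq 1$. By the triangle inequality, $\mathrm{dist}(\gamma_i,\gamma_{i+1}) \leq \mathrm{dist}(\gamma_i,p_i)+1+\mathrm{dist}(p_{i+1},\gamma_{i+1}) \leq 3$, so on $\cE$ the set $T_{\gamma_i}$ is connected to $T_{\gamma_{i+1}}$ in $\bigcup\cN$ for each $i$ (apply the conclusion of $\cE$ to the codeword $\gamma_i$); applying it to $\gamma_0$ and to $\gamma_m$ likewise connects $T_f$ to $T_{\gamma_0}$ and $T_g$ to $T_{\gamma_m}$. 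Since each $T_h$ is internally connected, concatenating all of these paths produces a path from $T_f$ to $T_g$ in $\bigcup\cN$, which is what we want.

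As for difficulty: essentially all of the probabilistic work is already done inside \Cref{lem:large-connections}, and the only genuinely new ingredient is recognising $\set{f\st T_f\inter C\neq\emptyset}$ as a perfect code and using covering radius $1$; I therefore expect no real obstacle, beyond disposing of the degenerate cases ($\l=k$, and small $k-\l$) and checking the routine union-bound arithmetic.
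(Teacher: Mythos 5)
Your proof is correct and follows essentially the same route as the paper: union-bound \Cref{lem:large-connections} over the $f$ with $T_f\cap C\neq\emptyset$, then chain codewords at mutual distance at most $3$ along a path in $Q_{U^*}$ using covering radius $1$. The only difference is that you make the covering-radius observation explicit by identifying $\ker\psi$ as a Hamming code in $Q_{U^*}$, whereas the paper simply cites the argument from \Cref{lem:big-components} that every cell is at distance at most $1$ from one meeting $C$; these are the same fact.
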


\begin{proof}
    By a simple union bound, we have that the conclusion of \Cref{lem:large-connections} holds for every \(f\) with probability at least \(1 - 2^d \exp\p{-2^{d/2}} > 1 - \exp\p{-2^{d/4}}\), so we now assume that this holds deterministically.

    Fix \(f,g\in Q_{U^*}\).
    We can find a path from \(f\) to \(g\) in \(Q_{U^*}\) by first stepping to some \(h_1\) such that \(T_{h_1}\inter C\neq \emptyset\), noting that, as in the proof of \Cref{lem:big-components}, each small cube either intersects \(C\) or is adjacent to a small cube which intersects \(C\). 
    Then by assumption, \(T_f\) is connected to \(T_{h_1}\).
    Then we may find a sequence \(h_1,h_2,\dotsc,h_t\) such that for each \(i\) the distance from \(h_i\) to \(h_{i+1}\) is exactly 3, \(T_{h_i} \inter C\) is non-empty, and either \(h_t = g\) or \(h_t\) is adjacent to \(g\).
    Then our assumption tells us that \(T_{h_i}\) is connected to \(T_{h_{i+1}}\) for every \(i\), and \(T_{h_t}\) is connected to \(T_g\).
\end{proof}


\subsection{Deducing the result}
\label{subsec:proof-of-main}

Finally, we may put the above ingredients together finish the proof.

\begin{proof}[Proof of \Cref{thm:main}]
    By a union bound, the conclusions of \Cref{cor:small-connected}, \Cref{lem:adjacent-connected}, and \Cref{cor:all-large-connections} hold simultaneously with probability at least
    \[1 - \exp(-d(\log_2 d - 1)) - \exp\p[\big]{-2^{ck/3}} - \exp\p[\big]{-2^{d/4}}.\]
    
    Indeed, \(\exp(-d (\log_2 d - 1))\) and \(\exp(-2^{d/4})\) are both bounded above by \(2^{-d-2}\) for \(d \geq 14\).
    We also have \(\exp(-2^{ck/3}) < 2^{-d-2}\) if and only if \(2^{ck/3} > (d+2) \ln 2\), and as \(k > \log_2 d\) and \(c > 6\), we have \(2^{ck/3} > d^2 > (d + 2) \ln 2\) for \(d\geq 3\).
    Therefore these three results all hold with probability at least \(1-2^{-d} > 1 - \binom{d}{ck}^{-1}\), the required value for our union bound over all choices of \(\cN\).
    
    In the case that all three of these results hold, we know by \Cref{lem:big-components} that every set \(T_f\) is connected, and by \Cref{cor:all-large-connections} that any \(T_f\) and \(T_g\) are connected to each other by edges of \(\cN\).
    Thus the whole of \(Q_d\) is connected by edges of \(\cN\), as required.
\end{proof}


\section{Concluding remarks}
\label{sec:conclusion}

We have found a randomised 1-factorisation of the hypercube \(Q_d\) for \(d\geq 3000\) such that, with high probability, any subset of \(45\log_2 d\) of these matchings connects the whole hypercube.
This leaves the state of the art (for large \(d\)) at
\[ 3 \leq r(d) \leq 45\log_2 d. \]

There is still a significant gap between these two bounds, and it is not at all clear what the true growth rate of \(r(d)\) should be---it is entirely plausible that there is in fact a constant upper bound.
Pushing the upper bound below \(\log_2 d\) seems to be beyond the reach of random constructions such as that considered here, at least without some significant new idea.
However, based on the intuition that the limiting factor in such random constructions should be relatively small-scale obstructions to connectivity, we make the following conjecture.

\begin{conjecture}
    Let \(\cM\) be a 1-factorisation of \(Q_d\) sampled uniformly at random from the set of all such 1-factorisations, and let \(r(\cM)\) be the minimal integer \(r\geq 1\) such that the union of any \(r\) distinct 1-factors of \(\cM\) is connected.
    Then, with high probability, \(r(\cM) = (1 + o(1)) \log_2 d\).
\end{conjecture}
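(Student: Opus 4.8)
The plan is to establish the two halves of the conjecture separately: $r(\cM) \geq (1-o(1))\log_2 d$ and $r(\cM) \leq (1+o(1))\log_2 d$, each with high probability. Both reduce to understanding the \emph{local} behaviour of a uniformly random $1$-factorisation $\cM$ of $Q_d$, that is, the joint law of the colours that $\cM$ assigns to the edges of a fixed small subcube. The governing objects on both sides are \emph{closed subcubes}: an embedded copy $K \cong Q_r$ in $Q_d$ is \emph{closed} if its $r 2^{r-1}$ edges use only $r$ distinct colours of $\cM$, each colour class being a perfect matching of $K$ --- equivalently, if $V(K)$ is a connected component of the union of the matchings of $\cM$ meeting $K$. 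Since a subgraph of $Q_d$ with minimum degree at least $r$ has at least $2^r$ vertices (an easy induction on $r$), closed copies of $Q_r$ are the smallest possible obstructions to connectivity, which is why they should control the value of $r(\cM)$.

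For the lower bound, the plan is a second moment argument on closed copies of $Q_r$. First one shows that, for a random $1$-factorisation, the restriction of the edge-colouring to a fixed $Q_r$ is close in distribution to a random proper edge-colouring of $Q_r$ with a palette of $d$ colours; this should yield $\prob{K\text{ closed}} = d^{-(1+o(1))\, r 2^{r-1}}$. Multiplying by the number $\binom{d}{r}2^{d-r}$ of embedded copies of $Q_r$, the expected number of closed copies is $2^{d}\, d^{-(1+o(1))\, r 2^{r-1}}$, which tends to infinity as long as $r 2^{r} = O(d/\log d)$, equivalently $r = (1-o(1))\log_2 d$. One then needs the second moment: disjoint copies should be asymptotically independent (again by local decoupling), and the total contribution of intersecting pairs should be lower order, so that a closed copy of $Q_r$ exists with high probability throughout this range, forcing $r(\cM) \geq (1-o(1))\log_2 d$.

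For the upper bound, fix $r = (1+\eps)\log_2 d$ and union bound over the $\binom{d}{r}$ choices of a set $\cN$ of $r$ matchings. For each, $\bigcup \cN$ is disconnected only if some $A \subseteq V(Q_d)$ with $2^r \leq |A| \leq 2^{d-1}$ and $\delta(Q_d[A]) \geq r$ has no $\cN$-edge leaving it, so it suffices to prove
\[
\sum_{m = 2^r}^{2^{d-1}} N_m\, p_m = o\bigl(\tbinom{d}{r}^{-1}\bigr),
\]
where $N_m$ counts sets $A$ of size $m$ with $\delta(Q_d[A]) \geq r$ and $p_m$ is the probability that a fixed such $A$ is respected by all $r$ matchings of $\cN$. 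This needs (i) a stability version of the edge-isoperimetric inequality in $Q_d$: a set $A$ with $\delta(Q_d[A]) \geq r$ has small edge-boundary and is therefore structurally close to a subcube, which bounds $N_m$; and (ii) a sharp estimate $p_m \leq d^{-\Theta(e(A, V \setminus A))}$ for the chance that $r$ random matchings of a random $1$-factorisation simultaneously respect the cut $(A, V \setminus A)$ --- small enough to beat the entropy term $\log N_m \leq \log\binom{2^d}{m}$ once $r > (1+\eps)\log_2 d$.

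The main obstacle, common to both directions, is that every probability estimate above rests on controlling a uniformly random $1$-factorisation of $Q_d$; even the purely local question --- the law of the colours on a single $Q_r$ --- is in effect a random proper-edge-colouring (near Latin square) enumeration problem, a regime in which precise counts and quasirandomness are notoriously delicate. The natural route is through entropy and switching arguments in the spirit of the permanent inequalities of Bregman and Minc and of Egorychev, Falikman and Schrijver, applied uniformly across all scales $m$ and, for the second moment, to correlations between several subcubes simultaneously; carrying this out rigorously seems to require substantial new ideas.
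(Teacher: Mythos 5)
This statement is a \emph{conjecture} in the paper, stated without proof at the end of \Cref{sec:conclusion}; there is no argument of the authors to compare your attempt against. What you have written is, by your own closing paragraph, a research programme rather than a proof: it explicitly defers the central difficulty --- understanding the local distribution of a uniformly random $1$-factorisation of $Q_d$, even on a single fixed copy of $Q_r$ --- to ``substantial new ideas.'' Every quantitative step in both directions (the estimate $\prob{K\text{ closed}} = d^{-(1+o(1))r2^{r-1}}$, the asymptotic independence of disjoint copies for the second moment, and the cut-probability bound $p_m \le d^{-\Theta(e(A,V\setminus A))}$) rests on exactly this unproved input, so nothing in the sketch can currently be checked against a rigorous statement.

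That said, the plan is a sensible one and is in fact aligned with the intuition the paper offers for why the conjecture should hold: the authors attribute the conjectured $(1+o(1))\log_2 d$ threshold to ``relatively small-scale obstructions to connectivity,'' and your closed copies of $Q_r$ are precisely the minimal such obstructions (your observation that a subgraph of $Q_d$ of minimum degree $r$ has at least $2^r$ vertices is correct and justifies extremality). Your first-moment heuristic is internally consistent: with $\binom{d}{r}2^{d-r}$ candidate $r$-subcubes and per-copy probability $\approx d^{-r(2^{r-1}-1)}$, the expected count is $\Theta^*(2^d d^{-r(2^{r-1}-2)})$, which does transition near $r = \log_2 d$. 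For the upper bound, note that the union bound over all $\binom{d}{r}$ choices of $\cN$ plus all cuts $A$ is exactly the shape of argument one would expect, and stability of edge-isoperimetry in $Q_d$ is a reasonable tool for controlling $N_m$. But to turn any of this into a proof you would need quantitative local limit or quasirandomness results for uniformly random $1$-factorisations of $Q_d$, which do not exist in the literature; the entropy and switching technology you cite has so far been developed for $K_n$ and $K_{n,n}$ (Latin squares), not for the hypercube, and adapting it is itself a serious open problem. In short: no gap in your reasoning to flag beyond the one you already flagged, but also no proof, and none in the paper either.
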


Any improvement of the lower bound, even to a larger constant, would also be of great interest.


\section{Acknowledgements}
\label{sec:acknowledgements}

Both authors are funded by the Internal Graduate Studentship of Trinity College, Cambridge.


\bibliographystyle{abbrvnat}  
\renewcommand{\bibname}{Bibliography}
\bibliography{main}




\end{document}